\title{Refined stability estimates for
mixed problems by exploiting semi norm arguments\thanks{Submitted to the editors June 13, 2025.}}
\author{Nicolas Gauger\thanks{University of Kaiserslautern-Landau (RPTU),
Bldg/Geb 34, Paul-Ehrlich-Strasse, 67663 Kaiserslautern,
Germany (nicolas.gauger@rptu.de, alexander.linke@rptu.de)}
\and Alexander Linke\footnotemark[2]
\and Christian Merdon\thanks{Weierstrass Institute for Applied Analysis and Stochastics, 10117, Berlin, Germany (christian.merdon@wias-berlin.de)}}
\DeclareMathOperator{\Ker}{Ker}
\DeclareMathOperator{\Range}{Range}
\newcommand{\bbR}{\mathbb{R}}
\newcommand{\vecb}[1]{\mathbf{#1}}
\newcommand{\bu}{\vecb{u}}
\newcommand{\bv}{\vecb{v}}
\newcommand{\bw}{\vecb{w}}
\newcommand{\bfe}{\vecb{f}}
\newcommand{\bx}{\vecb{x}}
\newcommand{\bzero}{\vecb{0}}
\newcommand{\bL}{\vecb{L}}
\newcommand{\bH}{\vecb{H}}
\newcommand{\bomega}{\boldsymbol{\omega}}
\begin{document}

\maketitle

\begin{abstract} 
Refined stability estimates are derived for classical mixed problems.
The novel emphasis is on the importance of semi norms
on data functionals, inspired by recent progress on
pressure-robust discretizations
for the incompressible Navier--Stokes equations.
In fact, kernels of these semi norms are shown to be connected to physical regimes in applications
and are related to some well-known consistency errors in
classical discretizations of mixed problems.
Consequently, significantly sharper
stability estimates for solutions close to these physical
regimes are obtained.
\end{abstract}
\begin{keywords}
    stability estimates, semi norms, mixed problems, mixed finite element methods, Navier--Stokes equations
\end{keywords}
\begin{MSCcodes}
35B35, 
65N12, 
65J05, 
65N30, 
76D05 
\end{MSCcodes}

\section{Introduction}
In this contribution, we revisit the stability estimates
for mixed problems taught in classical and modern finite
element textbooks \cite{GR86, BF91, BBF, EGII,MR4713490} and suggest a fundamental refinement. The important implications of
that refinement are investigated in three kinds of mixed problems: finite dimensional (matrix) problems, infinite dimensional problems like the Helmholtz--Hodge decomposition or the Stokes problem and also spatial
discretizations of infinite dimensional mixed problems
by (finite dimensional) mixed finite element methods.
The emphasis is on the significance of semi norms,
which is - at least in mixed finite element methods - hardly discussed in the classical literature, and
its importance has been revealed only in recent years in the context
of so-called \emph{pressure-robust} space discretizations for the incompressible (Navier--)Stokes equations \cite{LINKE2014782,MR3564690, MR3683678, MR4031577, MR3875918}. Thus, this contribution
aims at generalizing this recent insight to a broader class of
mixed problems, yielding a first step towards a conceptual update of the classical theory of mixed finite element methods.

The emergence of semi norms in mixed problems can be made plausible
briefly:
For bounded linear functionals $f \in V^\prime$ and $g \in Q^\prime$ with Hilbert spaces $V$ and $Q$,
consider a mixed problem of the form: seek $(u,p) \in V \times Q$ such that
\begin{align}
  a(u,v) + b(v,p) & = \langle f, v \rangle_{V^\prime \times V} & \text{for all } v \in V, \label{eqn:model_problem} \\
           b(u,q) & = \langle g, q \rangle_{Q^\prime \times Q} & \text{for all } q \in Q, \nonumber
\end{align}
where $a$ and $b$ represent bounded bilinear forms.
Assuming -- as usual -- \emph{well-posedness} of the mixed problem,
we observe immediately two (unique) special solutions with $g=0$ for
\eqref{eqn:model_problem}, namely:
\begin{align*}
  p_0 \in Q \quad  \wedge \quad \langle f, \bullet \rangle_{V^\prime \times V} = b(\bullet, p_0) & \quad \Rightarrow \quad \left ( u, p \right ) = \left (0, p_0 \right ),
   \\
  u_0 \in V \quad  \wedge \quad \langle f, \bullet \rangle_{V^\prime \times V} = a(u_0, \bullet) & \quad \Rightarrow \quad \left ( u, p \right ) = \left (u_0, 0 \right ).
\end{align*}
Thus, it is straight-forward to conclude for the solutions $(u_1, p_1)$
and $(u_2, p_2)$ of the general mixed problem with right hand sides
$(f_1, g) \in V^\prime \times Q^\prime$
and $(f_2, g) \in V^\prime \times Q^\prime$:
\begin{align*}
  p_0 \in Q \  \wedge \ f_1 - f_2 = b(\bullet, p_0) & \quad \Rightarrow \quad u_1 = u_2 \ \left ( \wedge \ p_1 - p_2 = p_0 \right ), \quad (\text{$u$-equivalence}) \\
  u_0 \in V \  \wedge \ f_1 - f_2 = a(u_0, \bullet) & \quad \Rightarrow \quad p_1 = p_2 \ \left ( \wedge \ u_1 - u_2 = u_0 \right ), \quad (\text{$p$-equivalence}).
\end{align*}
Therefore, the mixed problem \eqref{eqn:model_problem}
induces two sets of equivalence classes on the data $f$, which
we call $u$-equivalence and $p$-equivalence. This reflects the fact
that only certain parts of the data $f$ influence each
of the solution parts $u$ and $p$. If we further want to quantify
precisely these contributions of $f$ on $u$ and $p$,
we are led to semi norms on the data $f$, since equivalence classes
are connected to semi norms. Thus, we present novel refined stability estimates for mixed problems that are more precise with respect to data
dependencies than the classical estimates, presented, e.g., in \cite{BF91, BBF}.
Note that the classical estimates also long for optimality, but their
focus is only on the best-possible dependencies on quantifications of the
\emph{involved operators} $a$ and $b$ in \eqref{eqn:model_problem}. We remark that applications
of an investigation based on semi norms on the data
$f$, are not limited to stability estimates. 
Indeed, appropriate semi norms in a-priori and a-posteriori error analysis 
have been recently derived
in the context of pressure-robust finite element methods for the incompressible Stokes problem. On one hand, this led to
sharper data-dependent a priori error estimates, see in particular \cite[Section~4]{MR3683678}, \cite[Section~7]{ETNA2020} and \cite{MR3564690,MR3787384,BEIRAODAVEIGA2022115061,ZZ2025} for details, and on the other hand to more efficient adaptive refinement algorithms
through computable approximations of these semi norms, see 
in particular \cite[Section~5]{NuMa2019} and \cite{LM2015, JNM2022} for details.
Here, pressure-robustness for the incompressible Stokes problem
corresponds to $u$-equivalence in the mixed problem \eqref{eqn:model_problem}.
In contrast, the notion $p$-equivalence is used in this contribution for the
first time, but \cite{MR3407240}
derived a Stokes discretization
fulfilling such a property.

The significance of semi norms remains
if we change the model problem under investigation. E.g., $u$-equivalence
is invariant under changes of $a(\bullet, \bullet)$, since it depends only
on the bilinear form $b(\bullet, \bullet)$. As an example, we present below
how --- \emph{using semi norm arguments} --- some conclusions on the incompressible Stokes problem can be extended to
the (time-dependent) and nonlinear incompressible Navier--Stokes equations.
These considerations motivated the development of a novel pressure-robust convection stabilization in \cite{ABBGLM}, the first optimally converging stabilization for the divergence-free Scott--Vogelius finite element method.
Furthermore, semi norms are connected to
\emph{physical regimes} like \emph{hydrostatics},
\emph{potential flows}, \emph{vortex-dominated flows} and \emph{geostrophic flows}.
Such observations on hydrostatics and geostrophic flows
motivated the use of certain
mixed finite element methods, ---
which are pressure-robust when applied to the incompressible
Stokes problem, --- for numerical weather prediction \cite{MR2969702}.
Our exposition may serve as a blueprint to connect physical regimes via semi norms in other mixed problems.

Last but not least, examples of important finite-dimensional, infinite-dimensional and discretized mixed problems are presented in more detail.
The refined stability estimates
are shown to be more accurate than the classical estimates, in general.
Further, the intrinsic stability estimates of two classical discretization
schemes for the incompressible Stokes problem are compared, and
their different qualitative behavior is connected to a consistency error
with respect to $u$-equivalence, introduced above.


The paper is structured as follows. Section~\ref{sec:refined_estimates} presents the
refined stability estimates for quite general
nonsymmetric and symmetric frameworks in Hilbert spaces.
Section~\ref{sec:examples} studies analytical examples
that demonstrate the qualitative improvements
of the refined estimates for finite-dimensional and
infinite-dimensional mixed problems.
Section~\ref{sec:Inhom:NSE} explains the relevance of semi norm
arguments for the establishment of physical regimes in physical models,
which extend linear mixed problems, like the incompressible
Navier--Stokes equations.
Last but not least, Section~\ref{sec:pressure:robustness}
emphasizes the importance of semi norm arguments in
finite dimensional approximations (by finite element methods)
of infinite dimensional problems.
Section~\ref{sec:conclusions}
draws some conclusions.

\section{Refined stability estimates
for mixed methods}
\label{sec:refined_estimates}
This section revisits the mixed methods theory from \cite{BF91, BBF} for problem
\eqref{eqn:model_problem} to refine it in certain
aspects.

\subsection{Notation and general assumptions}
The usual operator norm for $f \in V^\prime$ is used, i.e.,
\begin{align}
   \| f \|_{V^\prime} & := \sup_{v \in V \setminus \lbrace 0\rbrace} \frac{\left | \langle f, v \rangle_{V^\prime \times V} \right |}{\| v \|_V}. \label{norm:functional}
\end{align}
We also introduce an operator norm, where the supremum in \eqref{norm:functional}
is taken over a closed
subspace $U \subset V$, where we will denote the
corresponding expression as
\begin{align}
   \| f \|_{U^\prime} & := \sup_{v \in U \setminus \lbrace 0\rbrace} \frac{\left | \langle f, v \rangle_{V^\prime \times V} \right |}{\| v \|_V}. \label{seminorm:functional}
\end{align}
We emphasize that \eqref{seminorm:functional} represents a
\emph{semi norm}, in general.
The bilinear forms $a : V \times V \to \bbR$ and $b : V \times Q \to \bbR$ are assumed to be continuous, i.e.,
\begin{align}
  |a(u,v)| \leq \| a \| \|u\|_V \|v\|_V\label{eqn:a_continuity}
  \quad \text{and} \quad
  |b(v,q)| \leq \| b \| \|v \|_V \|q\|_Q.
\end{align}
Thus, the bilinear form $b$ defines two operators $B : V \to Q^\prime$ and $B^t : Q \to V^\prime$ by
\begin{align*}
  \langle B v, q \rangle_{Q^\prime \times Q} = \langle v, B^t q \rangle_{V \times V^\prime} = b(v,q)
  \quad \text{for all } v \in V, q \in Q,
\end{align*}
and the bilinear form $a$ defines an operator $A : V \to V^\prime$ by
\begin{align*}
  \langle A v, w \rangle_{V\prime \times V} = a(v, w)
  \quad \text{for all } v, w \in V.
\end{align*}
The model problem \eqref{eqn:model_problem}
can be written as an equivalent operator problem
\begin{align}
  A u + B^t p & = f \quad \text{in $V^\prime$}, \\
  B u & = g \quad \text{in $Q^\prime$}.
\end{align}
Important subspaces for the problem
under consideration are defined by
\begin{align}
  K & := \Ker B =
  \left \{ v \in V : b(v, q) = 0 \text{ for all } q \in Q \right \} \subset V, \\
  K^0 & := \left \{ f \in V^\prime:
   \langle f, v^0 \rangle_{V^\prime \times V} = 0 \text{ for all $v^0 \in K$}
   \right \}.
\end{align}
Both subspaces $K \subset V$ and $K^0 \subset V^\prime$ are closed, see, e.g.,
\cite[Theorem 4.1.5]{BBF}.
$K^0$ is called the polar space of $K$.
In this contribution, we
assume $\Range B = Q^\prime$ and that
$a(\bullet, \bullet)$ satisfies the
inf-sup condition
\begin{align}
 \label{eqn:a_kernel_infsup1}
   0 < \alpha_0 \leq \inf_{0 \not= u \in K} \sup_{0 \not= v \in K} \frac{a(u,v)}{\| u \|_V \| v \|_V}
\end{align}
and the non-degeneracy condition
\begin{align}\label{eqn:non_degeneracy}
   u \in K \quad \& \quad a(u,v) = 0 \text{ for all } v \in K \quad \Rightarrow \quad u = 0.
\end{align}
\begin{remark}
It is well known, that if $a$ is coercive on $K$, i.e.,
\begin{equation}
    \exists_{\alpha > 0} :
    a(v^0, v^0) \geq \alpha \| v^0 \|_V^2
    \quad \text{for all $v^0 \in K$},
\end{equation}
conditions \eqref{eqn:non_degeneracy} and \eqref{eqn:a_kernel_infsup1} 
are satisfied with $\alpha_0 \geq \alpha$. 
\end{remark}
The surjectivity of $B$ implies that there
is a $\beta > 0$, such that
for all $g \in Q^\prime$ there exists
a $v \in V$ with $B v = g$ and
\begin{equation}\label{eqn:infsup}
  \| v \|_V \leq \frac{1}{\beta}
  \| g \|_{Q^\prime}.
\end{equation}
By the Banach closed range theorem
\cite[Theorem 4.1.5]{BBF} this is equivalent
to the boundedness of $B^t$ in the sense
\begin{align}\label{eqn:infsup2}
  0 < \beta \leq \inf_{q \in Q \setminus \lbrace 0 \rbrace}
  \sup_{v \in V \setminus \lbrace 0 \rbrace}
  \frac{b(v,q)}{\| v \|_V \|q\|_Q}.
\end{align}

\subsection{Refined stability estimates - general case}
We are now able to state our first main
result, namely sharper stability estimates than those given in classical and modern textbooks \cite[Proposition II.1.3]{BF91},
\cite[Theorem 4.2.3]{BBF}, \cite[Theorem 49.13]{EGII}, 
\cite[Theorem 1.3.27]{BGHRR24}, \cite[Theorem 4.3.1]{MR4713490}, \cite[Chapter 12.2]{MR2373954}
and \cite[Corollary 4.1, p. 61]{GR86}.
Note that in \cite{GR86, MR2373954} it is only
stated that the problem is well-posed, but
no stability estimates are given. Compared to the other mentioned references the stability estimates are refined with respect to semi norms of the data $f$.
\begin{theorem}\label{thm:stability_general}
The solution $(u,p) \in V \times Q$ of \eqref{eqn:model_problem} is unique and satisfies
\begin{align*}
  \| u \|_V & \leq
  \frac{1}{\alpha_0}  \| f \|_{K^\prime} + \frac{1}{\beta} 
               \left ( 1 +\frac{\| a \|}{\alpha_0} \right ) \| g\|_{Q^\prime}, \\
 \| p\|_Q
    & \leq \frac{1}{\beta}  \inf_{w^0 \in K} \left(\| f - a(w^0,\bullet) \|_{V^\prime} + \frac{\| a \|}{\alpha_0} \| f - a(w^0,\bullet) \|_{K^\prime} \right) 
      + \frac{\| a \|}{\beta^2} \! \left ( 1 + \frac{\|a\|}{\alpha_0}\right ) \!
        \| g \|_{Q^\prime}.
\end{align*}
\end{theorem}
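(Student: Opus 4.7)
The plan is to obtain the $u$-bound by the classical decomposition technique, adapted so that only the restriction of $f$ to $K$ appears, and then to derive the $p$-bound by exploiting the $p$-equivalence observation from the introduction, which will produce the infimum over $w^0 \in K$ essentially for free.

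For the $u$-estimate, I would first use the surjectivity of $B$ (equivalently \eqref{eqn:infsup}) to pick a lift $u_g \in V$ with $B u_g = g$ and $\|u_g\|_V \leq \beta^{-1}\|g\|_{Q^\prime}$. Writing $u_0 := u - u_g$, the constraint equation gives $u_0 \in K$, and testing the first equation of \eqref{eqn:model_problem} with $v \in K$ eliminates the $b(v,p)$-term, leaving $a(u_0,v) = \langle f,v\rangle_{V^\prime\times V} - a(u_g,v)$ for all $v \in K$. The inf-sup condition \eqref{eqn:a_kernel_infsup1} then yields $\alpha_0\|u_0\|_V \leq \|f\|_{K^\prime} + \|a\|\,\|u_g\|_V$, and the triangle inequality $\|u\|_V \leq \|u_0\|_V + \|u_g\|_V$ combined with the bound on $\|u_g\|_V$ produces the claimed estimate for $\|u\|_V$.

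For the $p$-estimate, the key move is the $p$-equivalence shift: for an arbitrary $w^0 \in K$, the pair $(u - w^0, p)$ solves the same mixed problem with the shifted right-hand side $(f - a(w^0,\bullet),\, g)$, since $B(u - w^0) = Bu = g$ and $a(u - w^0,v) + b(v,p) = \langle f - a(w^0,\bullet), v\rangle_{V^\prime\times V}$. Reapplying the already proven $u$-estimate to this shifted problem bounds $\|u - w^0\|_V$ in terms of $\|f - a(w^0,\bullet)\|_{K^\prime}$ and $\|g\|_{Q^\prime}$. Rewriting the first equation as $b(v,p) = \langle f - a(w^0,\bullet), v\rangle_{V^\prime\times V} - a(u - w^0, v)$ and invoking the inf-sup condition \eqref{eqn:infsup2} for $b$ gives $\beta\|p\|_Q \leq \|f - a(w^0,\bullet)\|_{V^\prime} + \|a\|\,\|u - w^0\|_V$. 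Substituting the shifted $u$-estimate and taking the infimum over $w^0 \in K$ produces exactly the second inequality of the theorem.

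Uniqueness follows quickly from \eqref{eqn:non_degeneracy} and \eqref{eqn:infsup2}: homogeneous data forces $u \in K$ with $a(u,v)=0$ on $K$, hence $u=0$ by non-degeneracy, and then $b(v,p)=0$ for all $v \in V$ together with \eqref{eqn:infsup2} gives $p=0$. The main conceptual obstacle compared to the classical derivation is recognizing that subtracting $a(w^0,\bullet)$ from $f$ for $w^0 \in K$ leaves the $p$-component of the solution unchanged, so the $u$-bound may be reinvested after optimizing the data $f$ within its $p$-equivalence class. Once this shift trick is identified, the remaining work is just bookkeeping of the constants $\alpha_0$, $\beta$, and $\|a\|$.
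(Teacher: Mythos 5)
Your proposal is correct and takes essentially the same route as the paper: a lift of $g$ plus the kernel inf-sup condition \eqref{eqn:a_kernel_infsup1} for the $u$-bound, the inf-sup condition \eqref{eqn:infsup2} applied to $b(v,p)=\langle f,v\rangle_{V^\prime\times V}-a(u,v)$ for the $p$-bound, and the infimum over $w^0\in K$ obtained from the observation that replacing $f$ by $f-a(w^0,\bullet)$ changes $u$ to $u-w^0$ but not $p$ (the paper calls this the invariance of $p$ with respect to $A(K)$ and takes the infimum after estimating, whereas you shift first and then estimate --- the same argument). The only point the paper covers that you skip is the existence of $(u,p)$ (via Babu\v{s}ka--Lax--Milgram and the closed range theorem), which the stated theorem presupposes, so your treatment of uniqueness and the two bounds matches the paper's proof in substance and constants.
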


The general structure of the proof follows the proof of \cite[Theorem 4.2.3]{BBF}.
\begin{proof}[Proof of existence of (u,p)]
From the surjectivity of $B$ follows the existence of a lifting operator $L_B : Q^\prime \rightarrow V$
such that $B(L_B g) = g$ for all $g \in Q^\prime$. Setting $u^g := L_B g$, we therefore have $B u^g=g$,
and we want to consider the new unknown $u^0 := u-u^g \in K$.
For any $v^0 \in K$ it holds $b(v^0, q)=0$ for all $q \in Q$ and therefore
\begin{align}\label{eqn:thm1_intermediate_estimate1}
  a(u^0, v^0) = \langle f, v^0 \rangle_{V^\prime \times V} - a(u^g, v^0) \in K^\prime.
\end{align}
Condition \eqref{eqn:non_degeneracy} yields
the existence of (a unique) $u^0 \in K$ by the Babu\v{s}ka--Lax--Milgram theorem.
For the functional $l \in V^\prime$ defined by
\begin{align}\label{eqn:definition_l}
\langle l, v \rangle_{V^\prime \times V} := \langle f, v \rangle_{V^\prime \times V} - a(u^0 + u^g, v) \quad \text{for all } v \in V,
\end{align}
\eqref{eqn:thm1_intermediate_estimate1} implies $l(v^0) = 0$ for all $v^0 \in K$, thus
$l \in K^0$. However, $K^0$ coincides with $\Range B^t$,
according to the Banach closed range theorem
\cite[Theorem 4.1.5]{BBF}. Hence,
$l$ is in the image of $B^t$ and there exists a $p \in Q$ with $B^t p = l$.
Therefore, it holds
$$
  \langle B^t p, v \rangle_{V^\prime \times V} = \langle l, v \rangle_{V^\prime \times V}
    = \langle f, v \rangle_{V' \times V} - a(u^0 + u^g, v)
    \quad \text{for all } v \in V,
$$
and $(u^0 + u^g, p) \in V \times Q$ fulfills the first equation
of \eqref{eqn:model_problem}. Further, it holds
$B u = B u^0 + B u^g = g$ and the second equation of \eqref{eqn:model_problem} is also fulfilled.
\end{proof}
\begin{proof}[Proof of uniqueness of (u,p)]
To prove uniqueness, one exploits the linearity of the problem. For data
$f=0$ and $g=0$, it holds $u \in K$. Therefore, it holds
$$
  a(u, v) + b(v, p) = a(u, v) = \langle 0, v  \rangle_{V' \times V} = 0
  \quad \text{for all } v \in K
$$
and one concludes $u=0$ due to \eqref{eqn:non_degeneracy}. Consequently, it holds
$$
  b(v, p) = 0
  \quad \text{for all } v \in V.
$$
Due to \cite[Corollary 4.1.1]{BBF} $B^t$ is injective and thus $p=0$ is the unique solution.
\end{proof}
\begin{proof}[Invariance of $p$ with respect to $A(K)$]
  Due to the uniqueness of the solution and the linearity of the problem
  \eqref{eqn:model_problem}, we observe that subtracting any
  $a(w^0,\bullet) \in A(K)$ from the right-hand side of the equation
  only changes $u$ to $u-w^0$ but not $p$. Hence, for estimating
  $\| p \|_Q$ below it is allowed to subtract any $a(w^0,\bullet) \in A(K)$.
\end{proof}
\begin{proof}[Proof of the stability bounds]
Recall that $\| u^g\|_V \leq \beta^{-1} \| g \|_{Q^\prime}$ due to \eqref{eqn:infsup}.
Further, it holds
\begin{align*}
  \alpha_0 \| u^0 \|_V \leq \sup_{0 \not= v \in K} \frac{a(u^0, v)}{\| v \|_V} & = \sup_{0 \not= v \in K} \frac{\langle f, v \rangle_{V^\prime \times V} - a(u^g, v)}{\| v \|_V}
     \leq \| f\|_{K^\prime} + \|a\| \, \| u^g \|_V.
\end{align*}
Thus,
\begin{align*}
  \| u \|_V \leq \| u^0 \|_V + \| u^g \|_V
               & \leq \frac{1}{\alpha_0}  \| f \|_{K^\prime} + \left (1 + \frac{\| a \|}{\alpha_0} \right )
                 \| u^g \|_V \\
               & \leq \frac{1}{\alpha_0}  \| f \|_{K^\prime} + \frac{1}{\beta} 
               \left ( 1 +\frac{\|a\|}{\alpha_0} \right ) \| g \|_{Q\prime}.
\end{align*}
Concerning the bound for $p$, \eqref{eqn:definition_l}, \eqref{eqn:a_continuity}, and the already proven estimate for $u$ yield
\begin{align*}
  \| p \|_Q \leq \frac{1}{\beta} \| l \|_{V^\prime}
    & = \frac{1}{\beta} \| f - a(u, \bullet) \|_{V^\prime}\\
    & \leq  \frac{1}{\beta} \left ( \| f \|_{V^\prime} + \| a \| \cdot \| u \| \right ) \\
    & \leq \frac{1}{\beta} \left( \| f \|_{V^\prime}
      + \frac{\| a \|}{\alpha_0} \| f \|_{K^\prime} \right) 
      + \frac{\| a \|}{\beta^2} \left ( 1 + \frac{\| a \|}{\alpha_0} \right )
        \| g \||_{Q^\prime}.
\end{align*}
Since $p$ is invariant against
any change of
$f$ to $f - a(w^0, \bullet)$ for $w^0 \in K$, one can take the infimum as stated in the assertion.
This concludes the proof of Theorem~\ref{thm:stability_general}.
\end{proof}

\begin{remark}[$u$/$p$-equivalence] \label{rem:compliance:equivalence:gen}
Theorem \ref{thm:stability_general} complies with the $u$-equivalence
and the $p$-equivalence of the mixed problem. Indeed,
for any $q \in Q$ setting
$\bfe = b(\bullet, q)$ and $g=0$, yields
$$
  \| u \|_V  \leq
  \frac{1}{\alpha_0}  \| b(\bullet, q) \|_{K^\prime} = 0
  \quad \text{and hence $u$-equivalence}.
$$
Further, for any $v^0 \in K$
setting $f = a(v^0, \bullet)$ and $g=0$ yields
$$
\| p\|_Q
     \leq \frac{1}{\beta}  \inf_{w^0 \in K} \left(\| a(v^0, \bullet) - a(w^0,\bullet) \|_{V^\prime} + \frac{\| a \|}{\alpha_0} \| a(v^0, \bullet) - a(w^0,\bullet) \|_{K^\prime} \right) = 0,
$$
since we can set $w^0 := v^0$ in the infimum, assuring $p$-equivalence.
\end{remark}
\subsection{Refined estimates - symmetric case}
This section, derives further refined stability estimates under stronger assumptions on the bilinear
form $a(\bullet, \bullet)$, which hold, e.g.,
for the incompressible Stokes problem.
These additional assumptions are
\begin{align}
  a(u,v) & = a(v,u)  && \text{(symmetry),} \label{eqn:a_symmetry} \\
  \exists \alpha > 0, \quad a(u,u) & \geq \alpha \|u \|^2_V 
  \quad \text{for all } u \in V && \text{(coercivity)} \label{eqn:a_coercivity}.
\end{align}
Then, $a(\bullet, \bullet)$ defines a scalar product on $V \times V$, and there exists an
$a$-orthogonal decomposition of $V$ into the subspaces $K$ and
\begin{align*}
  K_a^\perp := \left\lbrace v^\perp \in V : a(v^\perp,v^0) = 0 \text{ for all } v^0 \in K \right\rbrace.
\end{align*}
The operators $\Pi_K : V \rightarrow K$ and $\Pi_{K_a^\perp} := 1 - \Pi_K$
denote the projectors onto $K$ and $K_a^\perp$, respectively, defined by $a(\Pi_{K}v - v, w^0) = 0$ for all $v \in V$ and for all $w^0 \in K$.
There is a possibly larger coercivity constant $\alpha_0$ restricted to functions from $K$, i.e.,
\begin{align}\label{eqn:a_coercivity_K}
\exists \alpha_0 \geq \alpha > 0, \quad a(u,u) & \geq \alpha_0 \|u \|^2_V
\quad \text{for all } u \in K.
\end{align}
The stronger assumptions allow for the following stability estimates.
\begin{theorem}\label{thm:stability_symmetric}
Assuming
\eqref{eqn:a_symmetry} and
\eqref{eqn:a_coercivity},
  the solution $(u,p) \in V \times Q$ of \eqref{eqn:model_problem} satisfies
  \begin{align}
    \|u\|_V & \leq \frac{1}{\alpha_0} \| f \|_{K^\prime}\label{eqn:u_stability_symmetric} + \frac{1}{\beta} \left( \frac{\| a \|}{\alpha} \right)^{1/2} \| g \|_{Q^\prime},\\
    \|p\|_Q & \leq \frac{1}{\beta} 
\| f \circ \Pi_{K_a^\perp} \|_{V^\prime}
+ \frac{\| a \|}{\beta^2} \| g \|_{Q^\prime}
\leq
    \frac{1}{\beta} \left(\frac{\|a\|}{\alpha}\right)^{1/2} \| f \|_{(K_a^\perp)^\prime}
    + \frac{\| a \|}{\beta^2} \| g \|_{Q^\prime}.\label{eqn:p_stability_symmetric}
  \end{align}
\end{theorem}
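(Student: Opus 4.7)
The plan is to mimic the proof of Theorem~\ref{thm:stability_general} but to exploit the $a$-orthogonal decomposition $V = K \oplus_a K_a^\perp$ available under \eqref{eqn:a_symmetry}--\eqref{eqn:a_coercivity} in order to decouple the contributions of $f$ and $g$ more sharply. Existence/uniqueness follow already from Theorem~\ref{thm:stability_general}, so only the two stability bounds need to be established.

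First I would construct a special lifting of $g$. Pick any $\tilde u^g \in V$ with $B\tilde u^g = g$ and $\|\tilde u^g\|_V \leq \beta^{-1}\|g\|_{Q^\prime}$ from \eqref{eqn:infsup}, and set $u^g := \Pi_{K_a^\perp}\tilde u^g \in K_a^\perp$. Since $\tilde u^g - u^g \in K$, still $B u^g = g$, and because $\Pi_{K_a^\perp}$ is the $a$-orthogonal projector it is a contraction in the $a$-norm $\|\cdot\|_a := a(\cdot,\cdot)^{1/2}$, so
\begin{equation*}
  \|u^g\|_a \leq \|\tilde u^g\|_a \leq \|a\|^{1/2}\,\|\tilde u^g\|_V \leq \frac{\|a\|^{1/2}}{\beta}\|g\|_{Q^\prime}.
\end{equation*}
Coercivity \eqref{eqn:a_coercivity} then yields $\|u^g\|_V \leq \alpha^{-1/2}\|u^g\|_a \leq \beta^{-1}(\|a\|/\alpha)^{1/2}\|g\|_{Q^\prime}$.

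Next, set $u^0 := u - u^g \in K$. Because $u^g \in K_a^\perp$, the term $a(u^g,v^0)$ vanishes for all $v^0 \in K$, so the equation for $u^0$ collapses to $a(u^0,v^0) = \langle f,v^0\rangle_{V^\prime\times V}$ on $K$. Testing with $v^0 = u^0$ and using \eqref{eqn:a_coercivity_K} gives $\alpha_0\|u^0\|_V^2 \leq a(u^0,u^0) \leq \|f\|_{K^\prime}\|u^0\|_V$, hence $\|u^0\|_V \leq \alpha_0^{-1}\|f\|_{K^\prime}$. The triangle inequality combined with the bound on $\|u^g\|_V$ gives \eqref{eqn:u_stability_symmetric}.

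For the pressure, the first equation of \eqref{eqn:model_problem} yields $b(v,p) = \langle f,v\rangle_{V^\prime\times V} - a(u,v)$ for all $v \in V$. Decompose $v = v^0 + v^\perp$ with $v^0 = \Pi_K v$ and $v^\perp = \Pi_{K_a^\perp}v$. Using $a$-orthogonality, $a(u^0,v) = a(u^0,v^0)$ and $a(u^g,v) = a(u^g,v^\perp)$; together with $a(u^0,v^0) = \langle f,v^0\rangle_{V^\prime\times V}$ this rewrites to
\begin{equation*}
  b(v,p) = \langle f,\Pi_{K_a^\perp}v\rangle_{V^\prime\times V} - a(u^g,v).
\end{equation*}
Bounding the first term by $\|f\circ\Pi_{K_a^\perp}\|_{V^\prime}\|v\|_V$ and the second by $|a(u^g,v)| \leq \|u^g\|_a\|v\|_a \leq (\|a\|/\beta)\|g\|_{Q^\prime}\|v\|_V$, and dividing by $\|v\|_V$ before taking the supremum, the inf-sup condition \eqref{eqn:infsup2} delivers the first inequality in \eqref{eqn:p_stability_symmetric}. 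The second inequality follows from $\|\Pi_{K_a^\perp}v\|_V \leq \alpha^{-1/2}\|\Pi_{K_a^\perp}v\|_a \leq \alpha^{-1/2}\|v\|_a \leq (\|a\|/\alpha)^{1/2}\|v\|_V$, so that $|\langle f,\Pi_{K_a^\perp}v\rangle_{V^\prime\times V}| \leq \|f\|_{(K_a^\perp)^\prime}(\|a\|/\alpha)^{1/2}\|v\|_V$.

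The conceptual heart, and the step I expect to carry the argument, is the choice of $u^g \in K_a^\perp$: it both eliminates the cross-term in the $u^0$-equation (yielding the clean $\alpha_0^{-1}\|f\|_{K^\prime}$ bound without an additive contribution from $g$) and, via $a$-orthogonal contraction, replaces the factor $1 + \|a\|/\alpha_0$ of Theorem~\ref{thm:stability_general} by $(\|a\|/\alpha)^{1/2}$. The rest is bookkeeping with the two norm equivalences between $\|\cdot\|_V$ and $\|\cdot\|_a$.
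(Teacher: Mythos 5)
Your proof is correct and reproduces exactly the constants of the theorem, but the route differs from the paper's in how the $g$-dependence and the pressure are handled. The paper splits the \emph{solution} into $u=u_f+u_g$, $p=p_f+p_g$ via the two auxiliary subproblems \eqref{eqn:model_problem_V0}--\eqref{eqn:model_problem_VP} with data $(f,0)$ and $(0,g)$, identifies $u_f=\Pi_K u$ and $u_g=\Pi_{K_a^\perp}u$, controls the $g$-part by the energy/duality identity $a(u_g,u_g)=-\langle g,p_g\rangle_{Q^\prime\times Q}$ combined with the inf-sup bound on $p_g$, and assembles the pressure bound from $\beta\|p_f\|_Q\leq\|f-a(u_f,\bullet)\|_{V^\prime}=\|f\circ\Pi_{K_a^\perp}\|_{V^\prime}$ plus the bound on $p_g$. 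You avoid both splittings: you construct an $a$-orthogonal lifting $u^g=\Pi_{K_a^\perp}\tilde u^g\in K_a^\perp$ of $g$, bounded through \eqref{eqn:infsup} and the contraction property of the $a$-orthogonal projection, which eliminates the cross term in the equation for $u^0=u-u^g\in K$ and gives \eqref{eqn:u_stability_symmetric} directly from \eqref{eqn:a_coercivity_K}; the pressure is then estimated in a single step from the identity $b(v,p)=\langle f,\Pi_{K_a^\perp}v\rangle_{V^\prime\times V}-a(u^g,v)$. In fact your pieces coincide with the paper's, since $u-\tilde u^g\in K$ implies $u^g=\Pi_{K_a^\perp}u$ and $u^0=\Pi_K u$, and your final estimate $\|\Pi_{K_a^\perp}v\|_V\leq(\|a\|/\alpha)^{1/2}\|v\|_V$ is precisely \eqref{eqn:perp_norm_estimate}. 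What your argument buys is brevity and directness (no auxiliary subproblems, no splitting of $p$, no duality detour through $p_g$); what the paper's buys is the explicit interpretation of $u_f,u_g$ as the solutions of the two physically meaningful subproblems and the two-sided equivalence $\|f\|_{(K_a^\perp)^\prime}\leq\|f\circ\Pi_{K_a^\perp}\|_{V^\prime}\leq(\|a\|/\alpha)^{1/2}\|f\|_{(K_a^\perp)^\prime}$, which supports the optimality and $p$-equivalence discussion elsewhere in the paper.
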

\begin{proof}
  The solution is decomposed into $u = u_f + u_g$ and $p = p_f + p_g$ such that
  \begin{align}
    a(u_f,v) + b(v,p_f) & = \langle f, v \rangle_{V^\prime \times V} && \text{for all } v \in V\label{eqn:model_problem_V0},\\
               b(u_f,q) & = 0 && \text{for all } q \in Q,\nonumber\\
    a(u_g,v) + b(v,p_g) & = 0 && \text{for all } v \in V,\label{eqn:model_problem_VP}\\
               b(u_g,q) & = \langle g, q \rangle_{Q^\prime \times Q} && \text{for all } q \in Q.\nonumber
  \end{align}
  From the second equation in \eqref{eqn:model_problem_V0} and testing the first equations of \eqref{eqn:model_problem} and \eqref{eqn:model_problem_V0}
  with $v = v^0 \in K$ one obtains
  \begin{align*}
     u_f \in K \quad \text{and} \quad a(u - u_f, v^0) = 0
     \quad \text{for all } v^0 \in K.
  \end{align*}
  Hence, $u_f = \Pi_K u =: u^0 \in K$.
  Similarly, testing the first equation in \eqref{eqn:model_problem_VP} with $v = v^0 \in K$ yields $u_g \in K_a^\perp $ and testing the first equations in \eqref{eqn:model_problem}, \eqref{eqn:model_problem_V0} and \eqref{eqn:model_problem_VP} with $v = v^\perp \in K_a^\perp$ gives
  \begin{align*}
    a(u - u_g, v^\perp) + b(v^\perp, p - p_g) = \langle f, v^\perp \rangle_{V^\prime \times V} = b(v^\perp, p_f)
     \quad \text{for all } v^\perp \in K_a^\perp.
  \end{align*}
  Since $p - p_g = p_f$, it follows $a(u - u_g, v^\perp) = 0$ for all $v^\perp \in K_a^\perp$ and hence $u_g = \Pi_{K_a^\perp} u =: u^\perp \in K_a^\perp$.
  
  The norm of $u_f$ can be estimated by \eqref{eqn:a_coercivity_K} and testing \eqref{eqn:model_problem_V0} with $v = u_f$, i.e.,
  \begin{align}\label{eqn:estimate_u0}
    \alpha_0 \| u_f \|^2_V \leq a(u_f,u_f) = \langle f, u_f \rangle_{V^\prime \times V} \leq \| f \|_{K^\prime} \| u_f \|_{V}.
  \end{align}
  
 For all $v \in V$, the estimate
 \begin{equation}
       \| v^{\perp} \|_V \leq \left ( \frac{\| a \|}{\alpha} \right )^\frac{1}{2} \| v \|_V
       \label{eqn:perp_norm_estimate}
 \end{equation}
 is readily proved by
 \begin{align}
    \| v^{\perp} \|_V^2
    \leq \frac{1}{\alpha} a(v^{\perp}, v^{\perp})
    \leq \frac{1}{\alpha} \left( a(v^{\perp}, v^{\perp}) + a(v^{0},v^{0}) \right) = \frac{1}{\alpha} a(v,v)
    & \leq \frac{\| a \|}{\alpha} \| v \|_V^2.
\end{align}
 
The inf-sup condition \eqref{eqn:infsup2} and Equation \eqref{eqn:model_problem_V0}
yield
\begin{align*}
\beta \| p_f \|_Q 
     \leq \sup_{v \in V \setminus \lbrace 0 \rbrace} \frac{b(v,p_f)}{\| v \|_V}
     = \sup_{v \in V \setminus \lbrace 0 \rbrace} \frac{\langle f, v \rangle_{V^\prime \times V} - a(u_f, v)}{\| v \|_V}
     = \| f - a(u_f, \bullet) \|_{V^\prime}.
\end{align*}
Let us prove the equivalence
\begin{align*}
\| f \|_{(K_a^\perp)^\prime}
\leq \| f - a(u_f, \bullet) \|_{V^\prime}
= \| f \circ \Pi_{K_a^\perp} \|_{V^\prime}
\leq \left( \frac{\| a \|}{\alpha} \right)^{1/2} \| f \|_{(K_a^\perp)^\prime}.
\end{align*}
Indeed, testing only with $v \in K_a^\perp$ gives
\begin{align*}
\| f - a(u_f, \bullet) \|_{V^\prime}
\geq \| f - a(u_f, \bullet) \|_{(K_a^\perp)^\prime}
= \| f \|_{(K_a^\perp)^\prime}.
\end{align*}
Since $u_f \in K$ satisfies \eqref{eqn:model_problem_V0},
together with \eqref{eqn:perp_norm_estimate} 
it follows
\begin{align*}
 \| f - a(u_f,\bullet) \|_{V^\prime}
& = \sup_{v \in V \setminus \lbrace 0 \rbrace} \frac{\langle f, v \rangle_{V^\prime \times V} - a(u_f, v)}{\| v \|_V}\\
& = \sup_{v \in V \setminus \lbrace 0 \rbrace} \frac{\langle f, v^\perp \rangle_{V^\prime \times V} - a(u_f, v^\perp)}{\| v \|_V}
= \| f \circ \Pi_{K_a^\perp} \|_{V^\prime}\\
& \leq \left( \frac{\| a \|}{\alpha} \right)^{1/2} \sup_{v^\perp \in K_a^\perp \setminus \lbrace 0 \rbrace} \frac{\langle f, v^\perp \rangle_{V^\prime \times V}}{\| v^\perp \|_V} \\
& = \left( \frac{\| a \|}{\alpha} \right)^{1/2} 
\| f \|_{(K_a^\perp)^\prime}.
\end{align*}
Eventually, one arrives at
\begin{align*}
\| p_f \|_Q \leq \frac{1}{\beta} 
\| f \circ \Pi_{K_a^\perp} \|_{V^\prime}
\leq \frac{1}{\beta} \left( \frac{\| a \|}{\alpha} \right)^{1/2} \| f \|_{(K_a^\perp)^\prime}.
\end{align*}

It remains to estimate the norms of $u_g$ and $p_g$. Testing \eqref{eqn:model_problem_VP} with $v = u_g$
yields
 \begin{align*}
     a(u_g,u_g) = -b(u_g,p_g) = -\langle g, p_g \rangle_{Q^\prime \times Q} \leq \| p_g \|_Q \|g \|_{Q^\prime}.
 \end{align*}
 The inf-sup condition \eqref{eqn:infsup2}, Equation \eqref{eqn:model_problem_VP} and a Cauchy--Schwarz inequality
 reveal
 \begin{align*}
    \beta \|p_g \|_Q
    \leq \sup_{v \in V \setminus \lbrace 0 \rbrace} \frac{b(v, p_g)}{\| v\|_V}
    = \sup_{v \in V \setminus \lbrace 0 \rbrace} \frac{-a(u_g, v)}{\| v\|_V}
    & \leq \|a\|^{1/2} a(u_g,u_g) ^{1/2}.
 \end{align*}
 The combination with the previous estimates yields
 \begin{align*}
     a(u_g,u_g)^{1/2} \leq \frac{\|a\|^{1/2}}{\beta} \|g \|_{Q^\prime} \quad \text{and} \quad
     \| p_g \|_Q^{1/2} \leq \frac{\|a\|^{1/2}}{\beta} \|g \|^{1/2}_{Q^\prime}
 \end{align*}
 and hence
 \begin{align*}
     \| u_g \|_V \leq \frac{1}{\alpha^{1/2}} a(u_g,u_g)^{1/2} \leq 
     \left ( \frac{\| a \|}{\alpha} \right )^{1/2} \frac{1}{\beta} \|g \|_{Q^\prime}
     \quad \text{and} \quad
     \| p_g \|_Q \leq \frac{\|a\|}{\beta^2} \|g \|_{Q^\prime}.
 \end{align*}
 The combination of the estimates of $u_g$ and $u_f$
 as well as $p_f$ and $p_g$
 and a triangle inequality yield \eqref{eqn:u_stability_symmetric} as well as \eqref{eqn:p_stability_symmetric}, respectively.
\end{proof}


\begin{remark}[$u$/$p$-equivalence] 
Similar to Remark \ref{rem:compliance:equivalence:gen}, it can be shown that
Theorem \ref{thm:stability_symmetric}
complies with the $u$-equivalence and the $p$-equivalence.
For the $p$-equivalence, any $w^0 \in K$ and $f := a(w^0, \bullet)$ imply
$f(v^\perp) = a(w^0, v^\perp) = 0$
for all $v^\perp \in K_{a^\perp}$.
\end{remark}

\begin{remark}[Comparison with \cite{BBF}]
In the symmetric case of \cite[Theorem 4.2.3]{BBF}
bounds with similar constants but without semi norms on the data are obtained, i.e.,
 \begin{align}
    \|u\|_V & \leq \frac{1}{\alpha_0} \| f \|_{V^\prime}\label{eqn:u_stability_classical} + \frac{2}{\beta} \left( \frac{\| a \|}{\alpha_0} \right)^{1/2} \| g \|_{Q^\prime},\\
    \|p\|_Q & \leq \frac{2}{\beta} 
    \left(\frac{\|a\|}{\alpha_0}\right)^{1/2} \| f \|_{V^\prime}
    + \frac{\| a \|}{\beta^2} \| g \|_{Q^\prime}.\label{eqn:p_stability_classical}
  \end{align}
Interestingly, \cite[Theorem 4.3.1]{BBF} derives
bounds with some semi norms for a perturbed saddle point problem with symmetric bilinearforms $a$ and
$c : Q \times Q \rightarrow \mathbb{R}$ with
\begin{align*}
  c(q,q) \geq \gamma \| q \|^2_Q.
\end{align*}
In the limit $\| c \| \rightarrow 0$, $\gamma \rightarrow \infty$ and $g_0 = 0$ one obtains
\begin{align}
    \|u\|_V & \leq \frac{2}{\alpha_0} \| f \|_{K^\prime}
    +\frac{1}{\beta}\left(1 + \frac{\|a\|^{1/2}}{\alpha_0^{1/2}}\right) \| g \|_{Q^\prime},\label{eqn:estimate_u_BBF_perturbed}\\
    \|p\|_Q & \leq \frac{1}{\beta} \| f \|_{(K^\perp)^\prime}
    + \frac{3}{\beta} \left(\frac{\|a\|}{\alpha_0}\right)^{1/2}\| f \|_{K^\prime}
    + \frac{\| a \|}{\beta^2} \| g \|_{Q^\prime}.\label{eqn:estimate_p_BBF_perturbed}
\end{align}
Here, $K^\perp$ is the orthogonal complement of $K$ in $V$ w.r.t.\ the standard scalar product in $V$ and not w.r.t.\ $a$ as in
Theorem~\ref{thm:stability_symmetric}.
The estimate \eqref{eqn:estimate_u_BBF_perturbed} for $u$ implies $u$-equivalence, but it is not mentioned by the authors. Moreover, the semi norms in \eqref{eqn:estimate_p_BBF_perturbed} do not allow to conclude $p$-equivalence, because
it also involves $\|f\|_{K^\prime}$ and $\| a(u^0,\bullet) \|_{(K^\perp)^\prime}$ for $u^0 \in K$ does not necessarily vanish. The main conclusion of \cite[Theorem 4.2.3]{BBF}
is the stability estimate of the form
\(
  \|u \|_V + \| p \|_Q \leq C \left(\| f \|_{V^\prime} + \| g \|_{Q^\prime} \right),
\)
which is derived for a product norm neither allowing $u$-equivalence nor $p$-equi\-va\-lence.
\end{remark}



In the remaining part of this section, we want to stress the following decomposition property for functionals induced by the mixed problem.
\begin{proposition}
The subspaces
\begin{align*}
  A(K) := \left \{ a(w^0, \bullet) \in V^\prime:
    w^0 \in K
    \right \} \quad \text{and} \quad
  K^0 = B^t(Q) := \left \{ b(\bullet, q) \in V^\prime:
    q \in Q
    \right \}
\end{align*}
are closed in $V^\prime$ and it holds
\begin{align}
    V^\prime = A(K) \oplus K^0.\label{eqn:decomposition_functionals}
\end{align}
\end{proposition}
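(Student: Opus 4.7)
My plan is to establish the two closedness statements first and then prove the direct sum decomposition by reusing the existence result of Theorem~\ref{thm:stability_general}.

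For closedness of $K^0$: this is already noted in the introductory paragraph of Section~\ref{sec:refined_estimates}. The identity $K^0 = B^t(Q)$ itself is a consequence of the Banach closed range theorem, exploiting that $\Range B = Q^\prime$, which is invoked in the existence proof of Theorem~\ref{thm:stability_general}. For closedness of $A(K)$, I plan to consider the linear map $T : K \to V^\prime$, $w^0 \mapsto a(w^0, \bullet)$, whose image is exactly $A(K)$. Since $K$ is closed in $V$, it is a Hilbert space, and by the inf-sup condition \eqref{eqn:a_kernel_infsup1} one has the lower bound
\begin{equation*}
  \| T w^0 \|_{V^\prime} \;\geq\; \| T w^0 \|_{K^\prime} \;=\; \sup_{0 \neq v \in K} \frac{a(w^0, v)}{\| v \|_V} \;\geq\; \alpha_0 \| w^0 \|_V
  \quad \text{for all } w^0 \in K.
\end{equation*}
Hence $T$ is injective with bounded inverse on its image, so $A(K) = T(K)$ is closed in $V^\prime$.

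Next I would check that $A(K) \cap K^0 = \{0\}$. Suppose $f = a(w^0,\bullet)$ with $w^0 \in K$ and simultaneously $f \in K^0$. Then $a(w^0, v^0) = \langle f, v^0 \rangle_{V^\prime \times V} = 0$ for all $v^0 \in K$, so the non-degeneracy condition \eqref{eqn:non_degeneracy} forces $w^0 = 0$ and thus $f = 0$.

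Finally, to prove $V^\prime = A(K) + K^0$, I plan to invoke Theorem~\ref{thm:stability_general} for arbitrary $f \in V^\prime$ with right-hand side $g = 0$: there exists $(u,p) \in V \times Q$ solving \eqref{eqn:model_problem}. The second equation $b(u,q) = 0$ for all $q \in Q$ gives $u \in K$, while the first equation rewritten as functionals in $V^\prime$ reads
\begin{equation*}
  f \;=\; a(u, \bullet) \,+\, b(\bullet, p) \;\in\; A(K) + B^t(Q) \;=\; A(K) + K^0.
\end{equation*}
Combined with the trivial intersection, this yields the direct sum \eqref{eqn:decomposition_functionals}.

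The only mildly delicate step is the closedness of $A(K)$; everything else follows immediately from results already established. The key insight is that the existence half of Theorem~\ref{thm:stability_general} is essentially a constructive decomposition statement for $V^\prime$, so Proposition~\ref{eqn:decomposition_functionals} is a clean reformulation of what has already been proven.
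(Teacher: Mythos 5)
Your proof is correct and follows essentially the same route as the paper: the decomposition is deduced from the (unique) solvability of the mixed problem, closedness of $K^0 = B^t(Q)$ from the Banach closed range theorem, and closedness of $A(K)$ from an elementary lower bound for the map $w^0 \mapsto a(w^0,\bullet)$ on the complete space $K$. The only minor difference is that you obtain this lower bound from the inf-sup condition \eqref{eqn:a_kernel_infsup1}, which covers the general nonsymmetric setting, whereas the paper's terse proof invokes the coercivity \eqref{eqn:a_coercivity_K} on $K$ for the same elementary argument.
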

\begin{proof}
The decomposition \eqref{eqn:decomposition_functionals}
follows directly from the unique solvability of the mixed problem and it remains to show that the two spaces are closed.
Since $B$ is surjective, the second statement follows
from the Banach closed range theorem
\cite[Theorem 4.1.5]{BBF}.
$A(K) \subset V^\prime$ is a subspace due to $a$ being bounded and bilinear.
The proof that $A(K)$ is also closed follows by elementary arguments using \eqref{eqn:a_coercivity_K}.

\end{proof}

\section{Examples}
\label{sec:examples}
This section discusses two finite dimensional and several infinite dimensional examples
that fit into the framework.

\subsection{Finite dimensions ---  general case}
\label{sec:ex1}
In order to demonstrate the differences
between the classical and the
refined stability estimates, we study the finite-dimensional mixed problem
from \cite[Remark 3.4.6, p. 169]{BBF}. Euclidean norms are used for vectors, and the compatible spectral norms for matrices.
The two matrices
$$
  A := \begin{pmatrix}
        1 & - 1 \\
        1 & a
      \end{pmatrix}
\quad \text{and} \quad
  B := \begin{pmatrix}
         b & 0
      \end{pmatrix}
$$
are composed to the system matrix
$$
  M := \begin{pmatrix}
        A & B^T \\
        B & 0
  \end{pmatrix}
  = \begin{pmatrix}
      1 & -1 & b \\
      1 & a & 0 \\
      b & 0 & 0
  \end{pmatrix}.
$$
We are interested in the case
$0 < a \ll 1$ and $0 < b \ll 1$ to see the optimal dependence of the bounds on the parameters, but also in the
correct data dependence of the bounds.
The matrix $B$ induces the subspaces
\begin{align*}
  K & = \left \{ \lambda \begin{pmatrix} 0 \\ 1 \end{pmatrix} : \lambda \in \mathbb{R} \right \}
  \quad \text{and} \quad
K^0 = \left \{ \lambda \begin{pmatrix}  1 \\ 0 \end{pmatrix} : \lambda \in \mathbb{R} \right \},
\end{align*}
and the mixed problem induces a decomposition of the forcing
$\left (f_1, f_2 \right )^T$ as
\begin{align*}
  \mathbb{R}^2 & = A \cdot K \oplus K^0
   = \left \{ \lambda \begin{pmatrix} -1 \\ a \end{pmatrix} :
     \lambda \in \mathbb{R} \right \}
     \oplus K^0.
\end{align*}
Obviously, it holds $\beta = b$.
Further, due to $\ker B = \left \{ \lambda \left (0, 1 \right )^T : \lambda \in \mathbb{R} \right \}$ and
$$
  A_{KK} := \left (\ker B \right )^T
    \cdot A \cdot \ker B = \left ( a \right ),
$$
it holds $\alpha_0 = a$.
A singular value decomposition for $A$ shows that for $a\to 0$, the larger singular value converges
to $\sqrt\frac{2}{3-\sqrt{5}} = \| a \|$.

For the linear system
$$
  M \begin{pmatrix}
    u_1 \\ u_2 \\ p
  \end{pmatrix}
   = \begin{pmatrix}
      f_1 \\ f_2 \\ g
    \end{pmatrix},
$$
one gets the solution
\begin{equation} \label{example:finite:dim:1}
  u_1 = \frac{g}{b}, \quad
  u_2 = \frac{f_2}{a}-\frac{g}{b a}, \quad
  p=\frac{1}{b} \left (f_1 + \frac{f_2}{a} \right )
  - \frac{1}{b^2} \left (1 + \frac{1}{a}  \right ) g.
\end{equation}
The refined estimates
from Theorem \ref{thm:stability_general} read
\begin{align*}
  \Theta_{u, \text{r}} & := \frac{1}{\alpha_0}  \| f \|_{K^\prime} + \frac{1}{\beta} 
               \left ( 1 +\frac{\| a \|}{\alpha_0} \right ) \lvert g \rvert, \\
  \Theta_{p, \text{r}} & := \frac{1}{\beta} \inf_{w^0 \in K} \left( \| f - A w^0 \| +  \frac{\| a \|}{\alpha_0} \| f - A w^0 \|_{K^\prime} \right) + \frac{\| a \|}{\beta^2} \! \left ( 1 + \frac{\|a\|}{\alpha_0}\right ) \!
        \lvert g \rvert,
\end{align*}
where $f = (f_1,f_2)$ and $\| f \|_{K^\prime}
= |f_2|$, and are compared to the classical estimates
\begin{align*}
  \Theta_{u, \text{c}} & := \frac{1}{\alpha_0}  \| f \| + \frac{1}{\beta} 
               \left ( 1 +\frac{\| a \|}{\alpha_0} \right ) \lvert g \rvert, \\
  \Theta_{p, \text{c}} & := \frac{1}{\beta} \left(1 + \frac{\| a \|}{\alpha_0}\right) \| f \| + \frac{\| a \|}{\beta^2}  \left ( 1 + \frac{\|a\|}{\alpha_0}\right )
        \lvert g \rvert,
\end{align*}
which exploit full norms instead of semi norms.
Since $1 \leq \|a\|$, $\Theta_{u,r}$ perfectly reflects
the dependence of $u$ on $f$ and $g$ in \eqref{example:finite:dim:1}.

\begin{figure}
    \includegraphics[width = 0.49\textwidth]{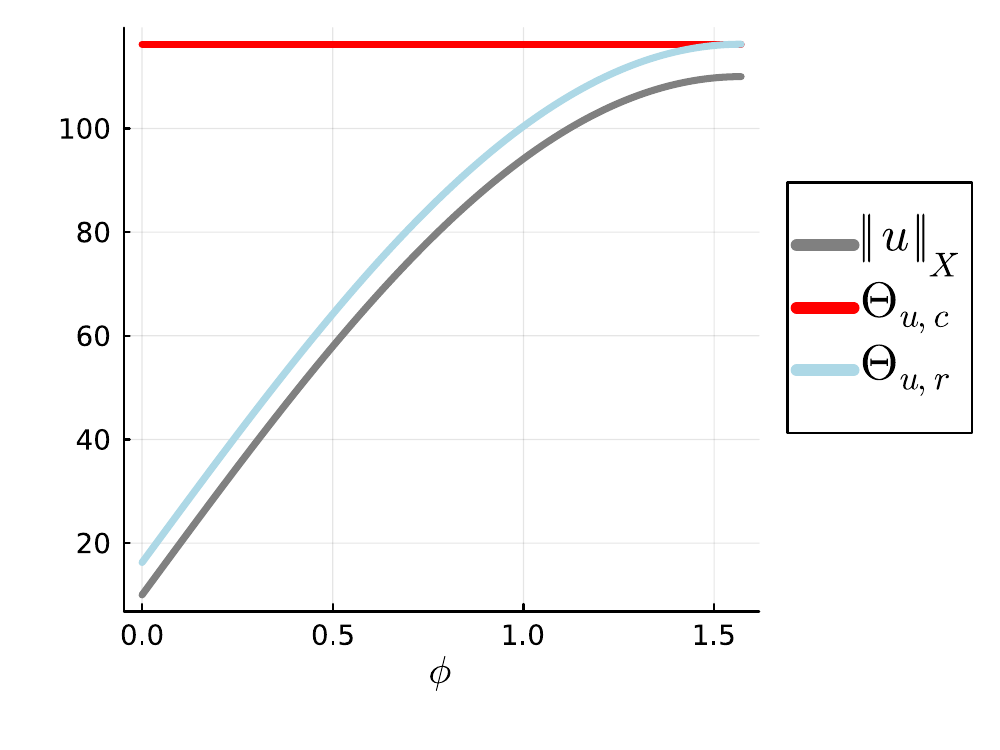}
    \hfill
    \includegraphics[width = 0.49\textwidth]{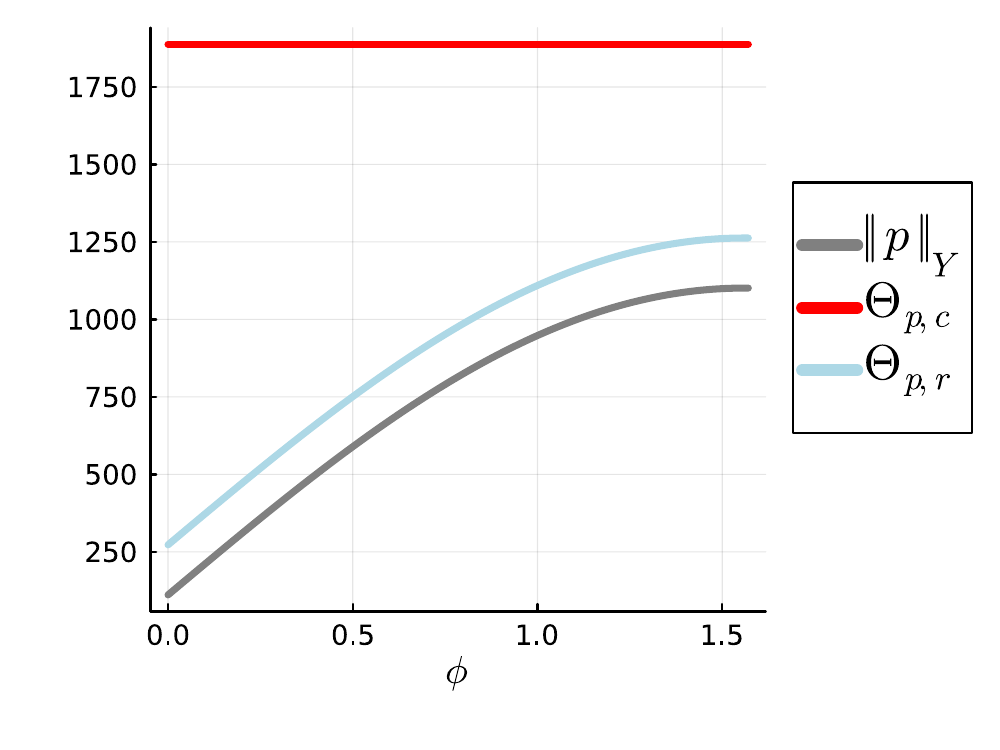}
    \caption{\label{fig:comparison_ex1}Exact norms, refined bounds and classical bounds in the example of Section~\ref{sec:ex1} with forces $(f_1,f_2) = (\cos(\phi), \sin(\phi))$ versus the $\phi$
    and parameters $a = 0.01$, $b = 0.1$ and $g = -0.01$.}
\end{figure}

Turning to the minimization process in the
estimate for $p$, observe that
only forces from $K^0$ change the variable $p$.
Hence, we seek $\lambda$ and $w^0 = \lambda (0,1)^t \in K$ such that
$$
  f-Aw^0 \in K^0 \quad \Leftrightarrow \quad
  \begin{pmatrix}
      f_1 \\
      f_2
  \end{pmatrix}
  - \lambda   \begin{pmatrix}
      -1 \\
      a
  \end{pmatrix}
  = \mu \begin{pmatrix}
      1 \\
      0
  \end{pmatrix}
  \quad \text{for some } \mu.
$$
This holds for $\lambda = f_2 / a$, resulting in
$$
  \begin{pmatrix}
      f_1 \\
      f_2
  \end{pmatrix}
  - \lambda   \begin{pmatrix}
      -1 \\
      a
  \end{pmatrix}
  = \begin{pmatrix}
      f_1 + f_2/a  \\
      0
  \end{pmatrix}.
$$
Thus, also $\Theta_{p,r}$ perfectly reflects
the dependence of $p$ on $f$ in \eqref{example:finite:dim:1}.

Figure~\ref{fig:comparison_ex1} shows the bounds and the exact norms for $u$ and $p$ with fixed pa\-ra\-me\-ters
$a = 0.01$, $b = 0.1$ and $g = -0.01$ and
forces $(f_1,f_2) = (\cos(\phi), \sin(\phi))$ versus $\phi$.
The refined bounds $\Theta_{u,r}$ and $\Theta_{p,r}$ appear closer to the real norms
than the classical bounds $\Theta_{u,c}$ and $\Theta_{p,c}$
for almost all $\phi$. Only
when $\phi$ is taken such that $\| f \|_{K^\prime} = \| f \|_{V^\prime}$, the classical bound for $u$ is as good as the refined bound.
Altogether, the example demonstrates the more predictive dependence of the refined bounds on $f_1$ and $f_2$.

\subsection{Finite dimensions --- symmetric case}
\label{sec:ex2}
The next example revisits \cite[Remark 3.4.7, p. 171]{BBF} and illustrates the results for the symmetric case.
Here, the mixed problem is defined by
$$
A := \begin{pmatrix}
  2 & \sqrt{a} \\
  \sqrt{a} & a
\end{pmatrix},
\quad \text{and} \quad
  B:= \begin{pmatrix}
    b & 0
  \end{pmatrix},
$$
leading to a system matrix
$$
  M = \begin{pmatrix}
        A & B^T \\
        B & 0
  \end{pmatrix}
  = \begin{pmatrix}
      2 & \sqrt{a} & b \\
      \sqrt{a} & a & 0 \\
      b & 0 & 0
  \end{pmatrix}
$$
with the solutions
\begin{equation} \label{example:finite:dim:2}
  u_1 = \frac{g}{b}, \quad
  u_2 = \frac{f_2}{a}-\frac{g}{b \sqrt{a}}, \quad
  p=\frac{1}{\sqrt{a}b} \left (\sqrt{a} f_1 - f_2 \right )
  - \frac{1}{b^2} g.
\end{equation}
Again, we are interested in the case $0 < a \ll 1$, $0 < b \ll 1$.
The space $K$ and its orthogonal complement read
\begin{align*}
K = \left \{ \lambda \begin{pmatrix} 0 \\ 1 \end{pmatrix} : \lambda \in \mathbb{R} \right \}
\quad \text{and} \quad
    K_a^\perp := \left\lbrace \lambda \begin{pmatrix}
        \sqrt{a}\\-1
    \end{pmatrix} : \lambda \in \mathbb{R}\right\rbrace.
\end{align*}
Moreover, one obtains the dual norms
\begin{align*}
\| f \|_{K^\prime} = \lvert f_2 \rvert, \quad
\| f \|_{(K_a^\perp)^\prime} = \frac{\lvert \sqrt{a} f_1 - f_2 \rvert}{\sqrt{a +1}}, \quad
\| g \|_{Q\prime} = \lvert g \rvert.
\end{align*}
A short calculation reveals
$\alpha \approx a/2$ and $\|a\| \leq 2$.
Hence, Theorem~\ref{thm:stability_symmetric} yields
  \begin{align}
    \|u\|_V & \leq \Theta_{u, \text{r}} := \frac{2}{a} \lvert f_2 \rvert + \frac{2}{\sqrt{a}b} \lvert g \rvert,\\
    \|p\|_Q & \leq \Theta_{p, \text{r}} := \frac{2}{\sqrt{a}b} 
    \frac{\lvert \sqrt{a} f_1 - f_2 \rvert}{\sqrt{a +1}}
    + \frac{2}{b^2} \lvert g \rvert
    = \frac{2}{\sqrt{a}b} 
    \frac{\left\lvert f_1 - \frac{f_2}{\sqrt{a}} \right\rvert}{\sqrt{\frac{a +1}{a}}}
    + \frac{2}{b^2} \lvert g \rvert.
  \end{align}
Up to constant factors this matches exactly the dependency on the data components $f_1$, $f_2$ and $g$
in the exact solution.

\begin{figure}
    \includegraphics[width = 0.49\textwidth]{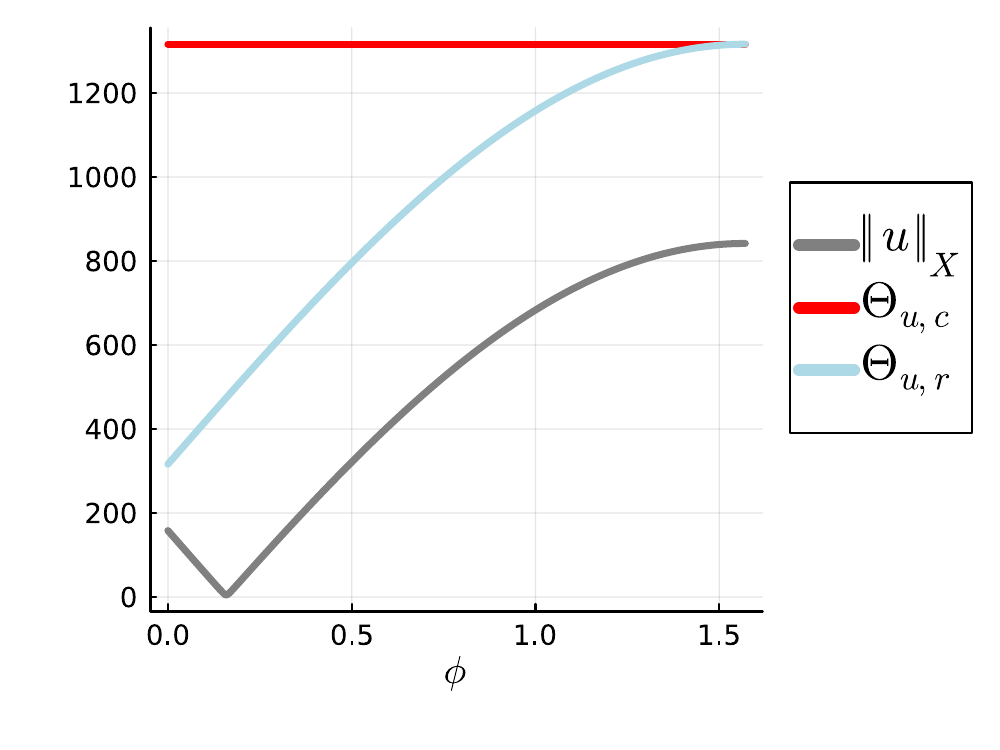}
    \hfill
    \includegraphics[width = 0.49\textwidth]{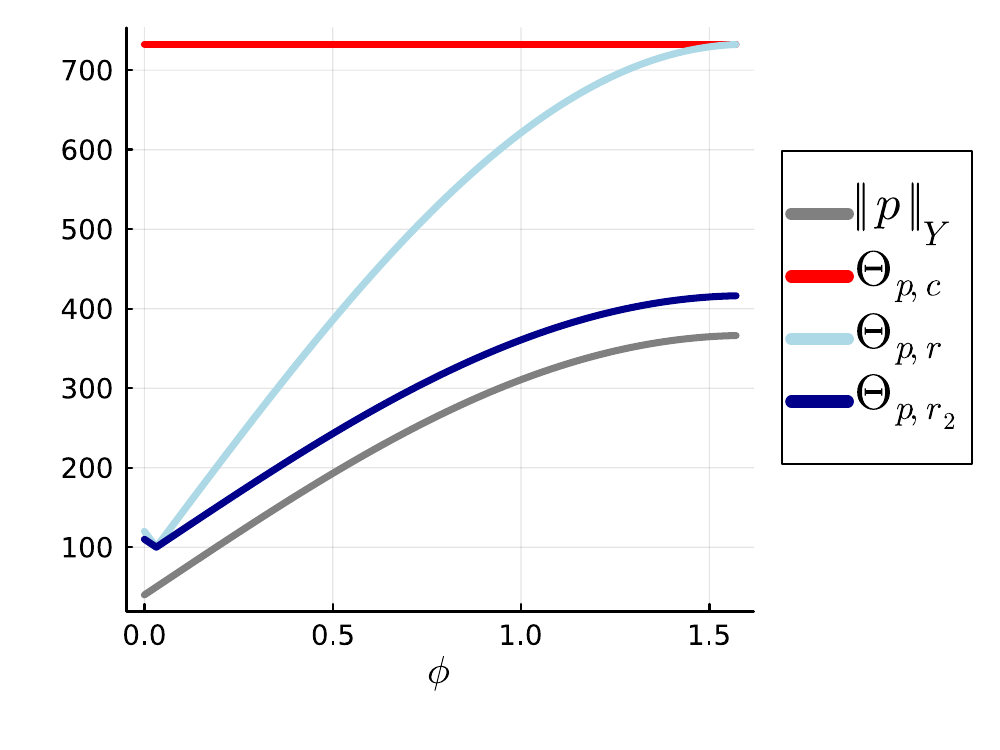}
    \caption{\label{fig:comparison_ex2}Exact norms, refined bounds and classical bounds in the example of Section~\ref{sec:ex2} with forces $(f_1,f_2) = (\cos(\phi), \sin(\phi))$ versus the $\phi$
    and parameters $a = 0.001$, $b = 0.1$ and $g = 0.5$.}
\end{figure}

Figure~\ref{fig:comparison_ex2} compares the refined bounds to 
classical bounds of the form
  \begin{align}
    \Theta_{u, \text{c}} := \frac{2}{a} \| f \| + \frac{2}{\sqrt{a}b} \lvert g \rvert
    \quad \text{and} \quad
    \Theta_{p, \text{c}} := \frac{2}{\sqrt{a}b} 
    \| f \|
    + \frac{2}{b^2} \lvert g \rvert.
  \end{align}
in a similar experiment as in the first example.
The forcing scans through all possible vectors via the
choice $f = (f_1,f_2) = (\cos(\phi), \sin(\phi))$ for
$\phi \in [0,\pi/2]$
and the other parameters are fixed by $a = 0.001$, $b = 0.1$ and $g = 0.5$. Additionally also the sharper refined
bound that involves $\| f - A u_f \|$ with $u_f = (0, f_2/a)$ is shown and reads
\begin{align*}
    \Theta_{p, \text{r}_2} := \frac{2}{\sqrt{a}b} 
    \left\lvert f_1 - \frac{f_2}{\sqrt{\alpha}} \right\rvert
    + \frac{2}{b^2} \lvert g \rvert.
\end{align*}
The conclusions are similar to the first example: through
the employment of semi norms much sharper a priori error
bounds are possible with dramatic differences in cases
where the forcing $f$ has a large part in $K^0$, i.e.\
for small $\phi$.

\subsection{Helmholtz--Hodge decomposition} \label{lab:class:HHD}
A typical application of mixed problems
are decompositions. As a first
application, we will briefly describe
the decomposition of an $\bL^2$ vector field
into the sum of a gradient and a divergence-free field
on a Lipschitz domain $\Omega$.
For this Helmholtz--Hodge decomposition
we choose as Hilbert spaces
$V:=\bL^2$ and $Q:=H^1 \big / \mathbb{R}$, equipped
with the norms $\| \bu \|_V := (\bu, \bu)_{L^2}^{1/2}$ and 
$\| p \|_Q := (\nabla p, \nabla p)_{L^2}^{1/2}$, respectively.
The bilinear forms read
\begin{align*}
    a(\bu, \bv) := \left ( \bu, \bv \right )_{L^2}
    \quad \text{and} \quad
    b(\bu, q) := \left ( \bu, \nabla q \right )_{L^2}.
\end{align*}
Thus, by
\begin{align*}
  \hspace{2cm}
  a(\bu, \bv) + b(\bv, p) & = (\bfe, \bv)_{L^2}
  && \text{for all } \bv \in V,
  \hspace{2cm}\\
   b(\bu, q) & = 0
    && \text{for all } q \in Q,
\end{align*}
an arbitrary vector field $\bfe \in \bL^2$
is decomposed
into $\bfe = \bu + \nabla p$, where
$\bu \in \bL^2_\sigma := \{ \bv \in \bL^2: (\bv, \nabla q)_{L^2} = 0 \, \text{ for all $q \in H^1 \big / \mathbb{R} $} \}$, i.e.,
the kernel $K$ is given by the space
of solenoidal (divergence-free) vector fields, whose normal flux vanishes at the boundary of the domain.
The well-posedness of the Helmholtz--Hodge
decomposition is shown by the above framework.
Indeed, $a$ is symmetric and coercive with $\| a \|=1$
and $\alpha=1$, and $b$ satisfies
$\| b \| = 1$ and $\beta=1$. Further,
we have $g = 0$.

Due to $b(\bu, q) = 0$, the
decomposition is orthogonal in the $\bL^2$
standard scalar product and one gets
$K_a^\perp = \{ \nabla q : q \in H^1 / \mathbb{R} \}$.
Theorem \ref{thm:stability_symmetric}
yields the estimates
\begin{align*}
  \| \bu \|_{\bL^2} \leq \|  \bfe \|_{\left (\bL^2_\sigma \right )'}
  \quad \text{and} \quad
  \| \nabla p \|_{\bL^2} \leq \|  \bfe \|_{\left (\nabla H^1\right )'}.
\end{align*}
Two \emph{consistency checks} show \emph{optimality}
w.r.t.\ the data $\bfe$ in the sense that
\begin{align*}
i) \ & \bfe = \bu \in \bL^2_\sigma \text{ delivers }
  \| \bu \|_{\bL^2} \leq \|  \bu \|_{\left (\bL^2_\sigma \right )'}
  \text{ and }
  \| \nabla p \|_{\bL^2} \leq 0,\\
ii) \ & \bfe = \nabla p \text{ with } p \in Q \text{ delivers }
  \| \bu \|_{\bL^2} \leq 0
  \text{ and }
  \| \nabla p \|_{\bL^2} \leq \| \nabla p \  \|_{\left (\nabla H^1\right )'}.
\end{align*}
\emph{Optimality} is derived by \emph{semi norm arguments}, and
the inequalities
$\| \bu \|_{\bL^2} \leq \|  \bu \|_{\left (\bL^2_\sigma \right )'}$
and $\| \nabla p \|_{\bL^2} \leq \| \nabla p \  \|_{\left (\nabla H^1\right )'}$  are actually \emph{equalities}.

\subsection{Another Helmholtz--Hodge decomposition}
There is another decomposition, on Lipschitz domains $\Omega$, of an $\bL^2$ vector field
into the sum of a curl-free function and a divergence-free field. To simplify, we only discuss the two-dimensional case. 
The operators $\mathbf{curl}: Q \rightarrow V$
and $\mathrm{curl}: V \rightarrow Q$ are defined by
\begin{align*}
  \mathbf{curl} \, q := \begin{pmatrix}
      \frac{\partial q}{\partial y} \\
      -\frac{\partial q}{\partial x} \\
  \end{pmatrix}
  \quad \text{and} \quad
  \mathrm{curl} \, \mathbf{v} := \frac{\partial v_1}{\partial y} - \frac{\partial v_2}{\partial x}.
\end{align*}
The Hilbert spaces
$V:=\bL^2(\Omega)$ and $Q:=H^1 / \mathbb{R}$ are equipped
with the norms $\| \bu \|_V := (\bu, \bu)_{L^2}^{1/2}$ and 
$\| p \|_Q := (\nabla p, \nabla p)_{L^2}^{1/2}
= (\mathbf{curl}\, p, \mathbf{curl}\, p)_{L^2}^{1/2}$.
The bilinear forms read
\begin{align*}
    a(\bu, \bv) := \left ( \bu, \bv \right )_{L^2}
    \quad \text{and} \quad
    b(\bv, q) := \left ( \bv, \mathbf{curl}\,  q \right )_{L^2}.
\end{align*}
Here, $a$ is symmetric and coercive with $\| a \|=1$
and $\alpha=1$, and for $b$ it holds
$\| b \| = 1$ and $\beta=1$.
The associated mixed problem decomposes a functional $\bfe \in V$ into
\begin{align*}
  \bfe = \bu + \mathbf{curl}\,  p
\end{align*}
with $p \in H^1(\Omega) / \mathbb{R}$ and $\bu \in K := \left\lbrace L^2(\Omega) : (\bu, \mathbf{curl}\,  q)_{L^2} = 0 \text{ for all } q \in Q \right\rbrace$, i.e., the space of curl-free vector fields with vanishing tangential flux at the domain boundary. Consequently, it holds
$K_a^\perp := \lbrace \mathbf{curl}\,  q : q \in Q \rbrace$.
Theorem \ref{thm:stability_symmetric} yields the estimates
\begin{align*}
  \| \bu \|_{\bL^2} \leq \|  \bfe \|_{K'}
  \quad \text{and} \quad
  \| \mathbf{curl}\,  p \|_{\bL^2} \leq \|  \bfe \|_{\left (\mathbf{curl}\,  H^1\right )'},
\end{align*}
which obey similar \emph{optimality} properties
w.r.t.\ the data $\bfe$ like
in Subsection \ref{lab:class:HHD}.

\subsection{The Stokes problem with homogeneous Dirichlet boundary conditions}
Consider the Stokes problem with homogeneous Dirichlet
velocity (no-slip) boundary conditions and a parameter $\mu > 0$. Its strong form seeks $\bu$ and $p$ with
\begin{align}\label{eqn:stokes_problem}
  -\mu \Delta \bu + \nabla p = \bfe
  \quad \text{and} \quad
  \mathrm{div} \, \bu = 0.
\end{align}
The problem is
represented as a mixed problem by
\begin{align*}
  a(\bu, \bv) + b(\bv, p) & = \langle \bfe, \bv \rangle
  && \text{for all } \bv \in V, \\
   b(\bu, q) & = 0
   && \text{for all } q \in Q,
\end{align*}
with $V := \bH^1_0$ and $Q := L^2 \big / \bbR$
and the bilinear forms
\begin{align*}
    a(\bu,\bv) := \mu (\nabla \bu, \nabla \bv)_{L^2}
    \quad \text{and} \quad
    b(\bv,q) := -(\mathrm{div} \, \bv, q)_{L^2}.
\end{align*}
Thus, the Stokes problem
induces the \emph{Stokes decomposition} of $V^\prime$ into
\begin{equation} \label{eq:Stokes:decomposition}
  \left ( \bH^1_0 \right )' = \left \{a(\bu_0, \bullet) : \bu_0 \in K \right \} \oplus \left \{b(\bullet, q_0) : q_0 \in Q \right \}
  \quad \text{where} \quad K := \bL^2_\sigma \cap \bH^1_0.
\end{equation}
Clearly,
the decomposition of the linear functional is independent of $\mu$.

Similar to the previous example, Theorem~\ref{thm:stability_symmetric}
yields the bounds
\begin{align*}
  \| \nabla \bu \|_{\bL^2} \leq \mu^{-1} \|  \bfe \|_{K'}
  \quad \text{and} \quad
  \| p \|_{L^2} \leq \beta^{-1} \|  \bfe \|_{(K_a^\perp)'},
\end{align*}
where $\beta$ is the inf-sup constant of the Stokes problem.
The \emph{consistency checks} from above for the Helmholtz--Hodge decomposition read for the Stokes problem like: i) Setting
$\bfe = a(\bu, \bullet) =: \langle -\mu \Delta \bu, \bullet \rangle_{V^\prime \times V} $ for $\bu \in K$, yields
\begin{align*}
  \| \nabla \bu \|_{\bL^2} \leq \mu^{-1} \|  -\mu \Delta \bu  \|_{K'}
    = \| \Delta \bu  \|_{K'}
    \quad \text{and} \quad
  \| p \|_{L^2} \leq 0,
\end{align*}
and ii) setting $\bfe = b(\bullet, p) =: \langle \nabla p, \bullet \rangle_{V^\prime \times V} $ for $p \in Q$
\begin{align*}
  \| \nabla \bu \|_{\bL^2} \leq 0
  \quad \text{and} \quad
  \| p \|_{L^2} \leq \beta^{-1} \|  \nabla p \|_{(K_a^\perp)'}.
\end{align*}
Thus, the Stokes problem above has two distinct physical regimes, induced by
the decomposition of $V'$.
The estimate in i) shows a dynamic behavior induced
by forcings $\bfe = a(\bu_0, \bullet)$ for any $\bu_0 \in K$.
Observe, that the parameter $\mu$ drops out from the stability estimate and does not cause any \emph{stability loss}. This is important
for \emph{pressure-robust} Stokes discretizations, see Section~\ref{sec:pressure:robustness} below.
The estimate in ii) manifests the physical
regime of \emph{hydrostatics}, induced by gradient forcings
$\nabla \phi \in \bH^{-1}$ with $\phi \in Q$.

Like in Subsection \ref{lab:class:HHD}, the inequality
$\| \nabla \bu \|_{\bL^2} \leq \| \Delta \bu  \|_{K'}$ is actually
an equality.
The inequality
$\| p \|_{\bL^2}  \leq \beta^{-1} \|  \nabla p \|_{(K_a^\perp)'}$
mirrors the classical norm equivalence
$$
  \beta \| q \|_{L^2} \leq \| \nabla q \|_{(\bH^1_0)'}
  \leq \| q \|_{L^2}
  \quad \text{for all } q \in Q.
$$

\subsection{The Stokes problem with inhomogeneous Dirichlet data}
The role of semi norms for establishing \emph{physical regimes} requires
to handle more general boundary conditions. When dealing with inhomogeneous velocity Dirichlet (slip) boundary conditions $\bu = \bu_0$ along $\partial \Omega$
for the Stokes problem \eqref{eqn:stokes_problem}
not much changes, because 
the weak formulation still
employs \emph{velocity test functions} $\bv \in \bH^1_0$ with zero boundary
conditions. To abstain from technicalities, we just argue that the following two observations hold for solutions $(\bu_j,p_j)$ for data $\bfe_j, j \in \lbrace1,2 \rbrace$:
\begin{align*}
  \bfe_1 - \bfe_2 = \nabla p_0 \text{ for } p_0 \in Q & \quad \Rightarrow \quad \bu_1 = \bu_2 \ \left ( \wedge \ p_1 - p_2 = p_0 \right ), \quad (\text{$\bu$-equivalence}) \\
  \bfe_1 - \bfe_2 = -\mu \Delta \bu_0 \text{ for } \bu_0 \in V & \quad \Rightarrow \quad p_1 = p_2 \ \left ( \wedge \ \bu_1 - \bu_2 = \bu_0 \right ), \quad (\text{$p$-equivalence}),
\end{align*}
with $V:=\bH^1_0$ and $Q:=L^2 \big / \mathbb{R}$ like in the Stokes problem
with \emph{homogeneous} Dirichlet velocity boundary conditions, since
$\bu_1 - \bu_2 = \bu_0 \in V$ has zero boundary conditions.

\section{Extending models: from the Stokes problem to
the transient (nonlinear) Navier--Stokes problem} \label{sec:Inhom:NSE}
A major question in the (space) discretization theory for partial
differential equations is, which relevance numerical analysis
for linear model problems (like the Stokes problem)
may have for more complicated 
models like the transient incompressible Navier--Stokes equations.
This section explains how \emph{semi norm} arguments can help to
bridge the gap between these worlds. 

\subsection{The Navier--Stokes equations}
Assuming inhomogeneous Dirichlet velocity boundary conditions and $\mu > 0$ and the Stokes problem
\begin{align*}
  -\mu \Delta \bu + \nabla p = \bfe
  \quad \text{and} \quad
  \mathrm{div} \, \bu = 0,
\end{align*}
adding the term $\left (\bu \cdot \nabla \right ) \bu$ leads to the steady incompressible Navier--Stokes equations
\begin{align*}
  -\mu \Delta \bu + \left (\bu \cdot \nabla \right ) \bu + \nabla p = \bfe
  \quad \text{and} \quad
  \mathrm{div} \, \bu = 0.
\end{align*}

Obviously, the Navier--Stokes problem is no longer a \emph{mixed problem} in the sense of \eqref{eqn:model_problem},
since it is nonlinear and also does not have a unique solution in general. Nevertheless, the \emph{$u$-equivalence} property of the Stokes problem is preserved for the steady incompressible
Navier--Stokes equations, since it depends only on the interplay
of the weak formulations $\bv \in \bH^1_0 \to b(\bv, p)$ (for $\nabla p$)
and $q \in L^2 \big / \mathbb{R} \to b(\bu, q)$ (for $\mathrm{div} \, \bu$).
Thus, for all $\phi \in L^2 \big / \mathbb{R}$, the forcing
$\bfe = \nabla \phi$ allows for a \emph{hydrostatic physical regime}
with solution $(\bu, p) = (\bzero, \phi)$. The
additional term $\left (\bu \cdot \nabla \right ) \bu$
vanishes for $\bu=\bzero$. Following the explanations in Section~\ref{sec:Inhom:NSE}, it still holds
\begin{align*}
  p_0 \in Q \ \wedge \ \bfe_1 - \bfe_2 = \nabla p_0 
  & \quad \Rightarrow \quad
  \bu_1 = \bu_2 \ \left ( \wedge \ p_1 - p_2 = p_ 0 \right ).
\end{align*}
This reflects the fact that also the data in the stability estimate for the
steady incompressible Navier--Stokes equations is measured in
the semi norm $\| \bullet \|_{(K)'}$.
Indeed, like in the linear Stokes problem, the space $K$ represents
the divergence constraint for the Navier--Stokes velocity solution $\bu$, and
the polar space $K^0 \subset (\bH^1_0)^\prime$ represents the gradient fields that
do no influence the dynamics of the Navier--Stokes equations, but only
the pressure field $p$.
More importantly, also
the nonlinear term
$\left ( \bu \cdot \nabla \right ) \bu$ can be investigated in light
of the semi norm $\| \bullet \|_{(K)'}$, since the Stokes decomposition
\eqref{eq:Stokes:decomposition} can be applied to any functional $\bfe \in V'$ and in particular to
$\left ( \bu \cdot \nabla \right ) \bu \in V'$.

\subsection{Physical regimes with irrotational convection terms}
Especially interesting are velocities $\bu$, for which
it happens to hold
$\left ( \bu \cdot \nabla \right ) \bu = \nabla q_0 \in K^0$ with
$q_0 \in Q$.
In this situation, the velocity solution $\bu$ of the linear
Stokes problem is also a velocity solution of the nonlinear Navier--Stokes
equations, and only the pressure changes to
$p_{\mathrm{nonlin}} = p_{\mathrm{lin}} - p_0$.
Actually, this set of velocity solutions $\bu$ builds an important
\emph{physical regime} of the nonlinear Navier--Stokes problem characterized by
$$
  \| \left ( \bu \cdot \nabla \right ) \bu \|_{K'} = 0,
$$
i.e., the strength of the nonlinear convection term (in this certain semi norm) is zero.
Indeed, such flows are called \emph{generalized Beltrami flows} and
consist of at least three subregimes: \emph{channel flows},
\emph{potential flows} and
\emph{vortical flows}.
We remark that all these flows are only driven by boundary conditions, i.e., it holds $\bfe = \bzero$. The three subregimes are discussed in the next subsections.

\subsubsection{Channel flows}
For channel flows like the classical Hagen-Poiseuille flow
it holds
$\left ( \bu \cdot \nabla \right ) \bu = \bzero$.
Thus, channel flows are a very special physical
regime, where it is not important that the nonlinear convection
term is measured in a semi norm or a norm, since it is identically zero,
anyway.
In consequence, the Stokes and
the Navier--Stokes solution also coincide with respect to the pressure $p$.

\subsubsection{Potential flows}
\label{sec:potential_flows}
For a potential flow, a harmonic function $h$ is given, i.e.,
$-\Delta h = 0$
and it holds $\bu = \nabla h$. Thus, the divergence constraint is fulfilled
due to
$$
  \mathrm{div} \, \bu = \mathrm{div} \, \nabla h = \Delta h = 0.
$$
Regarding the momentum equations, observe that
$-\mu \Delta \bu = -\mu \Delta \nabla h = -\mu \nabla \Delta h = \bzero$.
Thus, $\bu$ fulfills the incompressible Stokes equations
together with the pressure $p=0$, since
the Stokes momentum balance reads
$$
  \bzero= - \mu \Delta \bu + \nabla p = \nabla p.
$$
We now show that $\bu$ also fulfills the incompressible Navier--Stokes
equations, but turning from Stokes to Navier--Stokes the pressure
$p$ has to be adapted appropriately.
Therefore, we remind the reader that it holds
pointwise
$$
  \left ( \bu \cdot \nabla \right ) \bu = \bomega \times \bu
   + \frac{1}{2} \nabla \left ( \bu^2 \right )
\quad \text{with the \emph{Lamb vector}} \quad
  \bomega := \nabla \times \bu.
$$
For potential flows, $\bomega$ vanishes due to
\(\bomega = \nabla \times \bu = \nabla \times \nabla h = \bzero\)
and hence 
$$
  \left ( \bu \cdot \nabla \right ) \bu = \frac{1}{2} \nabla \left ( \bu^2 \right )
$$
is a gradient field.
Here, the semi norm argument is decisive: in potential flows
the nonlinear convection term is not zero in the norm
$\| \bullet \|_{V'}$, but it is zero in the norm
$\| \bullet \|_{K'}$.
Therefore, the Navier--Stokes momentum balance
boils down to
$$
  \bzero = -\mu \Delta \bu + \left ( \bu \cdot \nabla \right ) \bu
  + \nabla p  = \frac{1}{2} \nabla \left ( \bu^2 \right ) + \nabla p,
$$
and the Navier--Stokes pressure $p$ is given by $p=-\frac{1}{2} \bu^2 + C$,
where $C$ is a domain-dependent constant such that it holds $p \in Q$.

\subsection{Generalized Beltrami flows}
A third physical regime, where
the flow $\bu$ has neither a vanishing nonlinear convection term, nor satisfies $-\mu \Delta \bu = \bzero$ are the so-called \emph{(generalized) Beltrami flows}. Here,
the Lamb vector is either zero (Beltrami flows) or
a gradient field $\nabla \phi_0$ (generalized Beltrami flows), such that
$$
  \left ( \bu \cdot \nabla \right ) \bu
  = \bomega \times \bu
   + \frac{1}{2} \nabla \left ( \bu^2 \right )
   = \nabla \phi_0 + \frac{1}{2} \nabla \left ( \bu^2 \right ).
$$
is balanced with a pressure $p=-\phi_0 -\frac{1}{2} \bu^2 + C$.

Actually, generalized Beltrami flows
deliver vortical flows like the Taylor--Green vortex.
This can be explained from a physical point of view:
For a (steady) radially symmetric vortex, the nonlinear convection term can be understood as the (local) centrifugal force
\cite[p. 39f.]{MR3363553}. As a gradient field it is balanced by the pressure gradient
(which enforces the divergence constraint) and the vortex
does not explode.

Finally, we want to emphasize that all steady Navier--Stokes equations
can be transformed to a transient one by Galilean invariance.
For a constant velocity field $\bw_0$ and a steady Navier--Stokes solution
$(\bu_0, p_0)$ the transformations
$$
    \bu(t, \bx) := \bw_0 + \bu_0(t, \bx - t \bw_0),
    \qquad
    p(t, \bx) := p_0(\bx - t \bw_0)
$$
are transient Navier--Stokes solutions.
Since vortices play an important role in the solution theory of the
incompressible Navier--Stokes problem at hogh Reynolds numbers, it is interesting to connect them to solutions of the steady Stokes problem, where
the nonlinear convection term is zero, measured in the appropriate semi norm
above.

\subsection{Further physical regimes and extensions}
Adding further terms from more general physical models
leads to additional physical regimes. Adding, e.g., the Coriolis force
to the Stokes or Navier--Stokes momentum balance due to a rotating frame (with constant angular velocity $\boldsymbol{\Omega}$) allows 
\emph{geostrophic flows} characterized by $\| 2 \boldsymbol{\Omega} \times \bu\|_{K^\prime} = 0$
measured in the appropriate semi norm.

Moreover, this observation is not restricted to the incompressible
Navier--Stokes equations. Similar observations have been made
for (nearly) incompressible thermo-elasticity \cite{MR4213012},
where the right hand side is (nearly) zero, measured in an appropriate
semi norm. Further generalizations can be made by looking at singular
perturbation theory, which allows, e.g., to extend
considerations on the role of physical regimes induced by
semi norms like vortical flows, geostrophical flows, \ldots, from incompressible flows to \emph{low Mach number compressible flows} \cite{AKBAS2020113069}.

\section{Pressure-robustness in the Stokes problem}\label{sec:pressure:robustness}
The importance of semi norms for mixed problems was
realized for the first time
in the context of \emph{pressure-robust}
finite element discretizations for the incompressible Stokes
and Navier--Stokes
equations \cite{MR3564690}, i.e., in finite dimensional
(spatial) discretizations of an infinite dimensional problem.
The notion \emph{pressure-robustness} emphasizes
that there exists a subclass of mixed finite element discretizations
preserving $u$-equivalence under spatial discretization.
Referring to the Stokes decomposition \eqref{eq:Stokes:decomposition}, this means that
the (discrete) dynamics of pressure-robust mixed finite elements
is not affected by any forcing in $K^0$, i.e., by arbitrary gradient
fields in the sense of $(\bH^1_0)'$. In other words, pressure-robust
mixed finite elements ensure that any gradient field from $K^0$ is balanced only by discrete pressure gradients. We illustrate two consequences
by examples:
\begin{itemize}
\item[(i)] The discrete stability estimates
of pressure-robust mixed finite element methods resemble
the stability estimate of the corresponding infinite dimensional
problem better than non-pressure-robust discretizations;
\item[(ii)] Non-pressure-robust space discretizations suffer from a certain
(possibly huge) \emph{consistency error}, while pressure-robust discretizations do not.
\end{itemize}

%
%

\subsection{Pressure-robustness and discrete stability estimates}
The first example considers the right-hand side
\begin{align*}
  \bfe := \lambda \bfe_u + (1-\lambda) \bfe_p
  & := -\lambda \mu \Delta(\mathbf{curl}(x^2(x-1)^2y^2(y-1)^2)) + (1-\lambda) \nabla (x^3)
\end{align*}
with some coefficient $\lambda \in [0,1]$.
Seen as a functional $\bfe \in (\bH^1_0)'$ we have
\begin{align*}
  \bfe = a(\bu(\lambda), \bullet) + b(\bullet, p(\lambda))
\end{align*}
with the exact solutions (depending on $\lambda$) given by
\begin{align*}
    \bu(\lambda) = \lambda \, \mathbf{curl}(x^2(x-1)^2y^2(y-1)^2))    
\quad \text{and} \quad
p(\lambda) = (1-\lambda)x^3.
\end{align*}
Hence, for $\lambda = 1$ the pressure $p$ vanishes and for $\lambda = 0$ the velocity $\bu$ vanishes.
Observe that the operator norms in Theorem~\ref{thm:stability_symmetric} can be 
calculated by
\begin{align*}
  \| \bfe \|_{K^\prime}
  & = \| a(\bu,\bullet) \|_{K^\prime}
  = \mu \| \nabla \bu \|_{L^2},\\
  \| \bfe \|_{(K_a^\perp)^\prime}
  & = \| b(\bullet, p) \|_{(K_a^\perp)^\prime}
  = \| (\mathrm{div} \bullet, p) \|_{(K_a^\perp)^\prime}
  \leq \| p \|_{L^2}.
\end{align*}

Figure~\ref{fig:example_stability_stokes} compares
the norms of two discrete solutions $(\bu_\text{TH}, p_\text{TH})$
and $(\bu_\text{SV}, p_\text{SV})$ computed by the second-order Taylor--Hood finite element method \cite{taylor1973numerical} and 
the di\-ver\-gen\-ce-free second-order Scott--Vogelius \cite{SV1983} finite element method (on a barycentrically refined mesh), respectively. The left part of the figure also depicts two upper bounds
for the velocity norm, namely
\begin{align*}
    \Theta_{u,c} & := \mu^{-1} \| \bfe \|_{V^\prime} \leq \lambda \| \nabla \bu \|_{L^2} + (1-\lambda) \| p \|_{L^2},\\
    \Theta_{u,r} & := \mu^{-1} \| \bfe \|_{K^\prime} = \lambda \| \nabla \bu \|_{L^2}.
\end{align*}
The bound $\Theta_\text{u,c}$ is computed according to
\cite[Theorem 4.2.3]{BBF}, while the bound $\Theta_\text{u,r}$ is computed according to Theorem~\ref{thm:stability_symmetric}.
Similarly, the right part of the figure depicts 
the respective upper bounds for the pressure, namely
\begin{align*}
    \Theta_{p,c} := \beta^{-1} \| \bfe \|_{V^\prime}
    \quad \text{and} \quad
    \Theta_{p,r} := \beta^{-1} \| \bfe \|_{(K_a^\perp)^\prime}
    \leq \frac{1}{\beta} \| p \|_{L^2}.
\end{align*}
The inf-sup constant $\beta = 0.382683$
is a valid lower bound on the unit square \cite{Stoyan}.

\begin{remark}[$p$-equivalence]
Both methods are not $p$-equivalent and $\Theta_{p,r}$ is not a valid bound for $\|p_\text{TH}\|_{L^2}$ or $\|p_\text{SV}\|_{L^2}$ (or only with accounting for $\| a(\bu,\bullet) \|_{(K_{a,h}^\perp)^\prime}$). Due to the small parameter $\mu$ in the example, the error is negligible.
\end{remark}

Both selected discretizations are conforming in the sense that $V_h \subset V$ and $Q_h \subset Q$, and inf-sup stable, leading to
convergence with optimal order. The discrete stability estimates involve the kernel $K_h$, which differs depending on the method: the Scott--Vogelius finite element method is divergence-free, meaning $Q_h = \mathrm{div}(V_h)$, and therefore $K_h$ is a subset of $K$. Thus, the kernel of the semi norm $\| \bullet \|_{(K_h)'}$
is a superset of the kernel of the semi norm $\| \bullet \|_{(K)'}$,
and the discrete Scott--Vogelius resembles the
infinite dimensional stability estimate for the Stokes problem.
On the other hand, the Taylor--Hood finite element method is not divergence-free and it holds $K_h \not\subset K$. In this case,
functionals in the kernel of the semi norm
$\| \bullet \|_{(K)'}$ need not to be in
the kernel  of the discrete semi norm $\| \bullet \|_{(K_h)'}$.
Thus,
$\Theta_\text{u,r}$ above is in general not an upper bound
for $\bu_\text{TH}$.
Since $\| \bfe \|_{K_h^\prime} \leq \| \bfe \|_{V^\prime}$, the classical bound $\Theta_{u, c}$ is still valid. 


\begin{figure}
\includegraphics[width=0.325\textwidth]{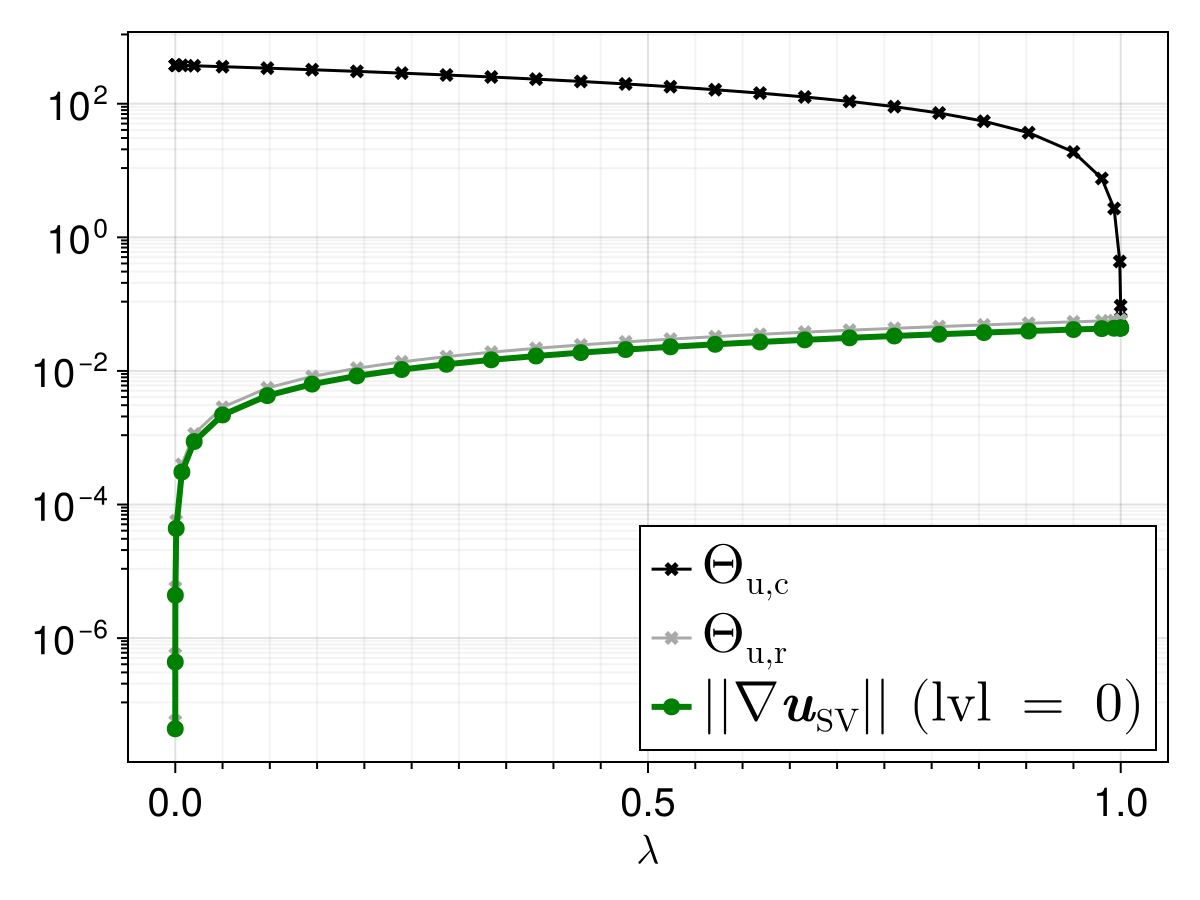}
\includegraphics[width=0.325\textwidth]{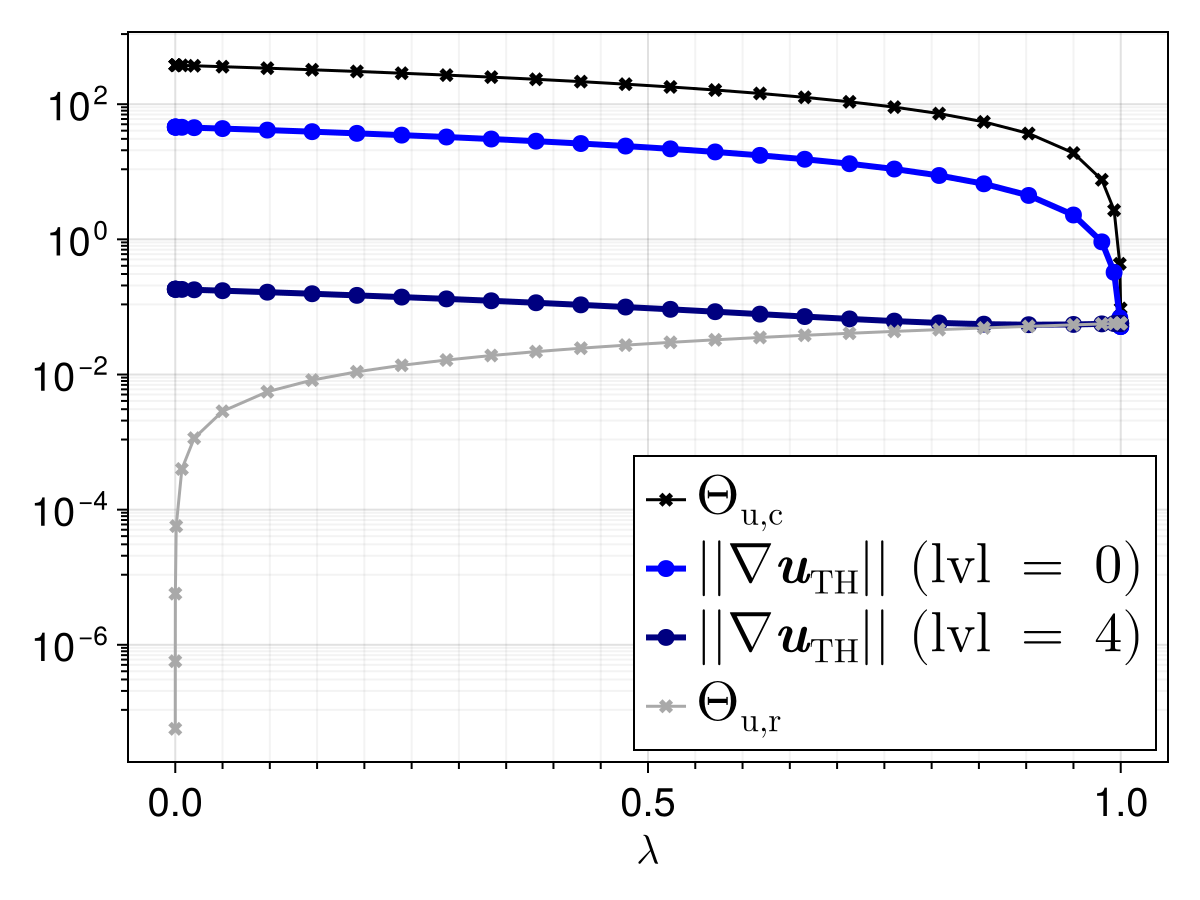}
\includegraphics[width=0.325\textwidth]{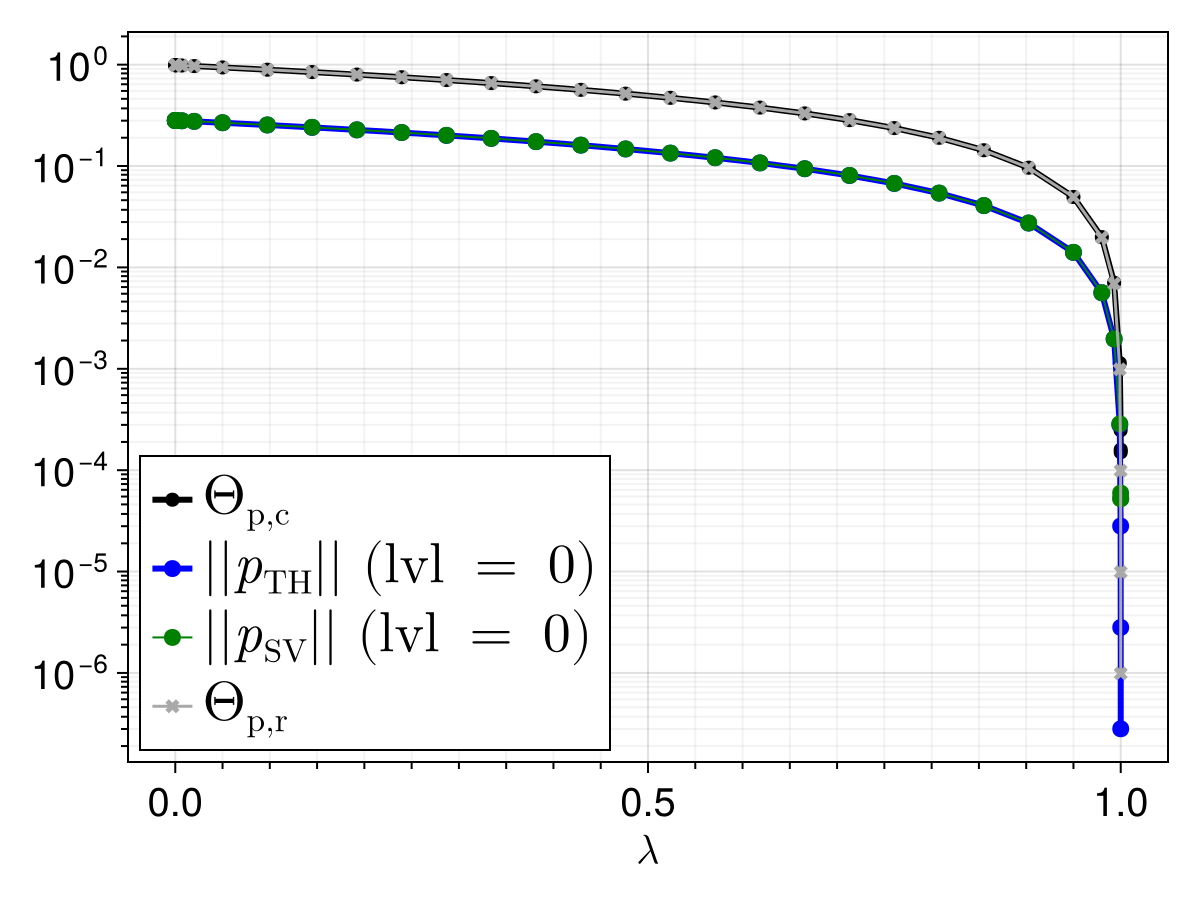}
\caption{\label{fig:example_stability_stokes}Bounds and norms of the Scott--Vogelius velocity $\bu_\text{SV}$ (left) and the Taylor--Hood velocity $\bu_\text{TH}$ (center) and of the pressures of both methods (right) versus $\lambda$ for $\mu = 10^{-3}$.}
\end{figure}

\subsection{Pressure-robustness and a consistency error}
Figure~\ref{fig:example_stability_stokes} shows, that the norms of the
Scott--Vogelius solution $\bu_{\text{SV}}$ respect the refined bound $\Theta_{u,r}$, and for $\lambda = 0$ one obtains $\bu_\text{SV} = \bzero$, consistent with
$\bfe \in K^0$ and $\bu = \bzero$ in this case.
Instead, the Taylor--Hood solution $\bu_\text{TH}$ is nonzero in the case $\lambda = 0$ and shows large violations of the refined bound $\Theta_{u,r}$.
Indeed, it holds $\bu_\text{TH} = \mathcal{O}(\mu^{-1})$.
This illustrates the missing $u$-equivalence of the Taylor--Hood finite element method
and reveals its consistency error.
In the language of mixed finite element methods, it suffers from a
parameter-dependent (and data-dependent)
locking phenomenon, although the Taylor--Hood
element is discrete inf-sup-stable and thus free of the
\emph{Poisson-locking phenomenon}. On the contrary,
the Scott--Vogelius element for the Stokes problem preserves the
$u$-equivalence and is free
of any locking phenomenon (on the used mesh family).
Following \cite{MR3683678}, the consistency error of the Taylor--Hood
element can also
be understood as a consistency error of the \emph{discrete vorticity equation} corresponding to the Taylor--Hood discretization.
A comparison of the norms of the Taylor--Hood solutions
on a very coarse mesh and on a (four times uniformly refined) finer mesh shows that they
move closer to the refined bound $\Theta_{u,r}$,
because the consistency error becomes smaller
on finer meshes.

The importance and the character
of this consistency error is underlined by
the next example, that considers the (nonlinear)
time-dependent Navier--Stokes problem
\begin{align*}
  \bu_t - \mu \Delta \bu + (\bu \cdot \nabla) \bu + \nabla p = \bzero
  \quad \text{and} \quad
  \mathrm{div} \bu = 0
\end{align*}
on the unit time interval and unit square
space domain $\Gamma \times \Omega := (0,1) \times (0,1)^2$
with inhomogeneous Dirichlet velocity boundary conditions
that match the initial velocity $\bu(0,x, y) := \nabla(x^3 - 3xy^2)$. It is the gradient of a harmonic polynomial and therefore in the class of potential flows of Section~\ref{sec:potential_flows}
with $(\bu \cdot \nabla) \bu = \frac{1}{2} \nabla \lvert \bu \rvert^2 \in K^0$ and hence $\| (\bu \cdot \nabla) \bu \|_{K^\prime} = 0$.
In other words, without exterior forces the exact solution $\bu$ stays constant in time.
Since, the initial velocity is quadratic, it is in both
the velocity ansatz spaces $V_h$ of the Taylor--Hood and
the Scott--Vogelius finite element methods.
The consistent Scott--Vogelius method preserves this velocity solution over time, since $\| (\bu \cdot \nabla) \bu \|_{K_h^\prime} = 0$. Thus, the Scott--Vogelius element `recognizes'
that the
potential flow under investigation is not \emph{convection-dominant},
since its discrete dynamics is connected to an appropriate semi norm,
in which the strength of the discrete nonlinear convection term is zero.
On the other hand, the Taylor--Hood solution $\bu_\text{TH}$ is spoiled by the consistency error
$\| (\bu \cdot \nabla) \bu \|_{K_h^\prime} \neq 0$.
Indeed, Figure~\ref{fig:example_potential_flow} tracks the $L^2$ velocity error
of both discretizations over time, with an implicit Euler time integration with timestep $0.01$, for $\mu = 10^{-4}$
on a given unstructured mesh. It can be seen that the error of the Scott--Vogelius method stays zero, while the error of the Taylor--Hood solution grows over time.

\begin{figure}
\centering
\includegraphics[width=0.55\textwidth]{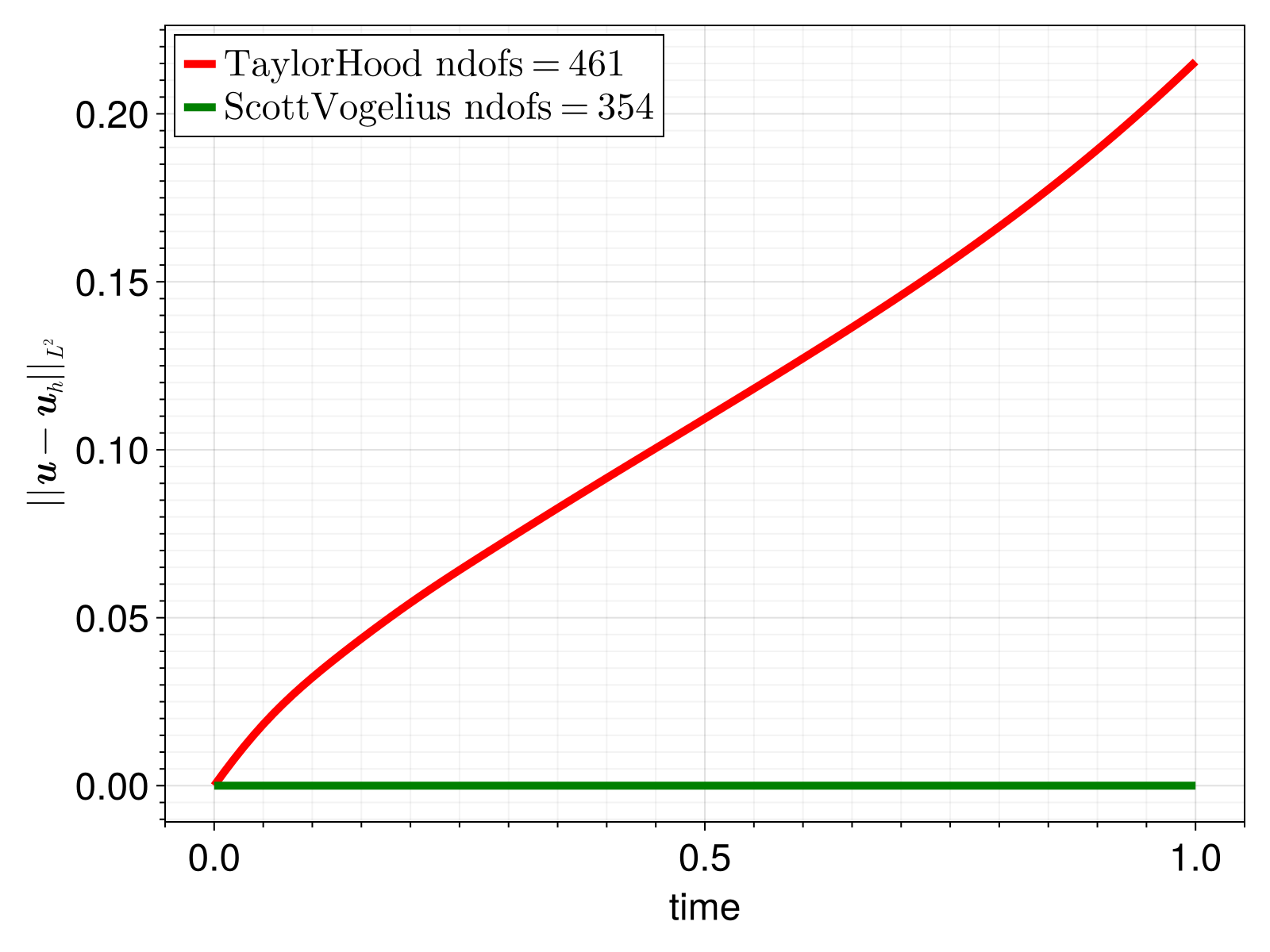}
\caption{\label{fig:example_potential_flow}Time-dependent error evolution for a quadratic potential flow of the Taylor--Hood and Scott--Vogelius finite element methods for the Navier--Stokes problem with $\mu = 10^{-4}$.}
\end{figure}

\section{Conclusions and Outlook}
\label{sec:conclusions}
The main message of this contribution is:
In mixed and related problems the data $f$ has a quality --- in the sense that there are two kinds of data. This quality can be
quantified by appropriate semi norms.
The refined investigation of stability estimates demonstrates
that respecting the quality of data leads to a more
predictive numerical analysis.
These semi norms are not only
relevant for the numerical analysis of data, but also for the analysis of involved operators
in more challenging problems:
indeed, in the Navier--Stokes problem it also allows to measure whether an incompressible flow is
\emph{convection-dominated}. Further, we connect the semi norms to physical regimes and thereby show the importance of the considerations for practical applications.
In future we will work on a
conceptual update for the numerical analysis of mixed finite element
methods by applying semi norms in a more abstract setting.

\bibliographystyle{siamplain}
\bibliography{literature}

\end{document}